\DeclareMathOperator{\CUT}{cut}
\DeclareMathOperator{\ord}{ord}
\DeclareMathOperator{\rank}{rank}
\DeclareMathOperator{\spn}{span}
\DeclareMathOperator{\tr}{Tr}
\newcommand{\R}{\mathbb{R}}
\newcommand{\s}{\mathbb{S}}
\newcommand{\E}{\mathcal{E}}
\newcommand{\CC}{\mathcal{C}}
\newcommand{\y}{\bf y}
\newcommand{\nonedge}{\overline{E}}
\newcommand{\symG}{\s^p_{\succeq0}(G)}
\newcommand{\cut}[1]{\operatorname{\CUT^{01}\left({#1}\right)}}
\newcommand{\cutpm}[1]{\operatorname{\CUT^{\pm1}\left({#1}\right)}}
\newcommand{\symmetricmatrices}{\s^p}
\newtheorem{thm}{Theorem}[section]
\newtheorem{prob}[thm]{Problem}
\newtheorem{prop}[thm]{Proposition}
\newtheorem{cor}[thm]{Corollary}
\newtheorem{lem}[thm]{Lemma}
\theoremstyle{definition}
\newtheorem{defn}[thm]{Definition}
\newtheorem{rmk}[thm]{Remark}
\newtheorem{example}[thm]{Example}
\begin{document}

\begin{abstract}
For a graph $G$ with $p$ vertices the closed convex cone $\symG$ consists of all 
real positive semidefinite $p\times p$ matrices with zeros in the off-diagonal entries corresponding to nonedges of~$G$.  
The extremal rays of this cone and their associated ranks have applications to matrix completion problems, 
maximum likelihood estimation in Gaussian graphical models in statistics, and Gauss elimination for sparse matrices.  
For a graph $G$ without $K_5$ minors, we show that the normal vectors 
to the facets of the $(\pm1)$-cut polytope of $G$ specify the off-diagonal entries of extremal matrices in $\symG$.  
We also prove that the constant term of the linear equation of each facet-supporting hyperplane is the rank of its corresponding extremal matrix in $\symG$.
Furthermore, we show that if $G$ is series-parallel then this gives a complete characterization of all possible extremal ranks of $\symG$, 
consequently solving the sparsity order problem for series-parallel graphs.  
\end{abstract}

\title[Extremal Positive Semidefinite Matrices for graphs without $K_5$ minors]{Extremal Positive Semidefinite Matrices\\ for graphs without $K_5$ minors} 
\author{Liam Solus}
\address{Department of Mathematics\\
         University of Kentucky\\
         Lexington, KY 40506--0027, USA}
\email{liam.solus@uky.edu}

\author{Caroline Uhler}
\address{Institute of Science and Technology Austria\\
         Am Campus I\\
         3400 Klosterneuburg, Austria\\ and\qquad
         Department of Electrical Engineering and Computer Science\\ Massachusetts Institute of Technology\\ Cambridge, MA 02139--4307, USA}
\email{caroline.uhler@ist.ac.at}

\author{Ruriko Yoshida}
\address{Department of Statistics\\
         University of Kentucky\\
         Lexington, KY 40506--0082, USA}
\email{ruriko.yoshida@uky.edu}

\date{\today}

\maketitle


\section{Introduction}
\label{introduction}

For a positive integer $p$ let $[p]:=\{1,2,\ldots,p\}$, and let $G$ be a graph with vertex set $V(G)=[p]$ and edge set $E:=E(G)$.  
To the graph $G$ we associate the closed convex cone $\symG$ consisting of all real $p \times p$ positive semidefinite matrices with zeros in all entries corresponding to the nonedges of $G$.  
In this paper, we study the problem of characterizing the possible ranks of the extremal matrices in $\symG$.  
This problem has applications to the positive semidefinite completion problem, and consequently, maximum likelihood estimation for Gaussian graphical models.  
Thus, the extreme ranks of $\symG$, and in particular the maximum extreme rank of $\symG$, have been studied extensively \cite{agler, grone, helton, laurent1}.  
However, as noted in \cite{agler} the nonpolyhedrality of $\symG$ makes this problem difficult, and as such there remain many graph classes for which the extremal ranks of $\symG$ are not well-understood.  
Our main contribution to this area of study is to show that the polyhedral geometry of a second well-studied convex body, the cut polytope of $G$, serves to characterize the extremal ranks of $\symG$ for new classes of graphs.  

The thrust of the research in this area has been focused on determining the \emph{(sparsity) order of $G$}, i.e. the maximum rank of an extremal ray of $\symG$.  
In \cite{agler} it is shown that the order of $G$ is one if and only if $G$ is a chordal graph, that is, a graph in which all induced cycles have at most three edges.  
Then in \cite{laurent1} all graphs with order two are characterized.  
In \cite{helton}, it is shown that the order of $G$ is at most $p-2$ with equality if and only if $G$ is the cycle on $p$ vertices, and in \cite{grone} the order of the complete bipartite graph is computed and it is shown that all possible extreme ranks are realized.  
However, beyond the chordal, order two, cycle, and complete bipartite graphs there are few graphs for which all extremal ranks are characterized.  
Our main goal in this paper is to demonstrate that the geometric relationship between $\symG$ and the cut polytope of $G$ can serve to expand this collection of graphs.

A \emph{cut} of the graph $G$ is a bipartition of the vertices, $(U,U^c)$, and its associated \emph{cutset} is the collection of edges $\delta(U)\subset E$ with one endpoint in each block of the bipartition.  
To each cutset we assign a $(\pm1)$-vector in $\R^E$ with a $-1$ in coordinate $e$ if and only if $e\in \delta(U)$.  
The $(\pm1)$-cut polytope of $G$ is the convex hull in $\R^E$ of all such vectors.  
The polytope $\cutpm{G}$ is affinely equivalent to the cut polytope of $G$ defined in the variables $0$ and $1$, which is the feasible region of the \emph{max-cut problem} in linear programming.  
Hence, maximizing over the polytope $\cutpm{G}$ is equivalent to solving the max-cut problem for $G$.  
The max-cut problem is known to be NP-hard \cite{poljak}, and thus the geometry of $\cutpm{G}$ is of general interest.  
In particular, the facets of $\cutpm{G}$ have been well-studied \cite[Part V]{deza}, as well as a positive semidefinite relaxation of $\cutpm{G}$, known as the \emph{elliptope of $G$} \cite{BJL96,barrett,laurent3,laurent2}.  
  
Let $\s^p$ denote the real vector space of all real $p\times p$ symmetric matrices, and let $\s^p_{\succeq0}$ denote the cone of all positive semidefinite matrices in $\s^p$.  
The \emph{$p$-elliptope} is the collection of all $p\times p$ \emph{correlation matrices}, i.e.
	$$\E_p=\{X\in\s_{\succeq0}^p : X_{ii}=1 \mbox{ for all $i\in[p]$}\}.$$
The elliptope $\E_G$ is defined as the projection of $\E_p$ onto the edge set of $G$.  
That is,
	$$\E_G=\{\y\in\R^E : \mbox{ $\exists Y\in\E_p$ such that $Y_e=y_e$ for every $e\in E(G)$}\}.$$
The elliptope $\E_G$ is a positive semidefinite relaxation of the cut polytope $\cutpm{G}$ \cite{laurent2}, and thus maximizing over $\E_G$ can provide an approximate solution to the max-cut problem. 
  
In this article we show that the facets of $\cutpm{G}$ identify extremal rays of $\symG$ for any graph $G$ that has no $K_5$ minors.  
We will see in addition that the rank of the extreme ray identified by the facet with supporting hyperplane $\langle \alpha, x\rangle = b$ has rank $b$,  and if $G$ is also series-parallel (i.e. no $K_4$ minors), then all possible ranks of extremal rays are given in this fashion.  
The method by which we will make these identifications arises via the geometric relationship that exists between the three convex bodies $\cutpm{G}$, $\E_G$, and $\symG$.  
A key component of this relationship is the following theorem which is proven in Section~\ref{the dual of an elliptope}.  
	\begin{thm}
	\label{dual body theorem}
	The polar of the elliptope $\E_G$ (see (\ref{def:polar}) for a definition) is given by 
	$$
	\E_G^\circ = \{ x\in\mathbb{R}^E : \exists \,X\in \symG \textrm{ such that } X_E=x \,\textrm{ and } \, \textrm{tr}(X)= 2\}.
	$$
	\end{thm}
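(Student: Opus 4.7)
The plan is to prove both inclusions of $\E_G^\circ = \{x : \exists X \in \symG \text{ with } X_E = x \text{ and } \tr(X) = 2\}$ separately. The forward direction is a short trace calculation using that $X$ and $Y$ are both PSD; the reverse direction is an application of SDP duality to the defining semidefinite program of the elliptope.

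For the inclusion $\E_G^\circ \supseteq \{\cdot\}$, suppose $X \in \symG$ with $X_E = x$ and $\tr(X) = 2$, and let $y \in \E_G$ be realized by some $Y \in \E_p$. Because $X, Y \succeq 0$, we have $\tr(XY) \geq 0$. Since $X_{ij} = 0$ for nonedges $ij \notin E$ and $Y_{ii} = 1$ for all $i$, expanding yields
\[
\tr(XY) = \sum_i X_{ii} + 2\sum_{ij \in E} X_{ij} Y_{ij} = \tr(X) + 2\langle x, y\rangle = 2 + 2\langle x, y\rangle,
\]
so $\langle x, y\rangle \geq -1$, which is the polar condition.

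For the reverse inclusion, I would translate the polar condition $\langle x, y\rangle \geq -1$ for all $y \in \E_G$ into the SDP infimum
\[
\inf\Big\{ \tfrac12 \tr(AY) \,:\, Y \succeq 0,\ Y_{ii} = 1 \text{ for all } i \Big\} \geq -1,
\]
where $A \in \symmetricmatrices$ is defined by $A_{ij} = x_{ij}$ for $ij \in E$ and zero elsewhere (including on the diagonal). Slater's condition is satisfied by $Y = I$, so strong duality gives
\[
\inf \tfrac12 \tr(AY) = \sup\Big\{ \tfrac12 \sum_i \mu_i \,:\, A - \mathrm{diag}(\mu) \succeq 0 \Big\}.
\]
The key step is to recognize the dual slack $M := A - \mathrm{diag}(\mu)$ as the sought matrix: the constraint $M \succeq 0$ together with $M_{ij} = A_{ij}$ for $i \neq j$ forces $M \in \symG$ with $M_E = x$, while $\tfrac12 \sum_i \mu_i = -\tfrac12 \tr(M)$. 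Thus the polar bound becomes $\inf\{\tr(M) : M \in \symG,\ M_E = x\} \leq 2$. To obtain trace exactly $2$, replace such a minimizer $M$ by $M + \tfrac{2-\tr(M)}{p} I$, which preserves PSDness, the off-diagonal pattern, and hence membership in $\symG$, while setting the trace to $2$.

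The main obstacle is the SDP duality bookkeeping: one must correctly identify the dual variable as an element of $\symG$ with the prescribed off-diagonal entries $x$, and verify Slater's condition to invoke strong duality. The trace-adjustment step at the end is routine, since adding a nonnegative multiple of $I$ keeps the matrix in $\symG$ and affects only the diagonal.
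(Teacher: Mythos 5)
Your proof is correct and arrives at the stated description of $\E_G^\circ$, but by a genuinely different route from the paper. The paper invokes the Ramana--Goldman formula for the polar of a spectrahedron to express $\E_p^\circ$ as a spectrahedral shadow (the projection of the trace-two slice of $\psdcone$), and then passes from $\E_p$ to $\E_G$ via the general duality between projections and coordinate sections, $((\E_p+\mathcal{L}^\perp)/\mathcal{L}^\perp)^\circ=\E_p^\circ\cap\mathcal{L}$. You instead prove both inclusions from scratch: the easy one via the trace pairing $\tr(XY)=2+2\langle x,y\rangle\ge 0$, and the hard one by writing the support function of $\E_G$ as an explicit SDP and extracting the dual slack $M=A-\mathrm{diag}(\mu)$ as the desired element of $\symG$, followed by the routine trace rescaling. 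Your dual is set up correctly, and what the argument buys is self-containedness plus an explicit treatment of attainment: Ramana--Goldman give the polar only as the \emph{closure} of a projection (a point the paper passes over silently), whereas your Slater argument via $Y=I$, together with compactness of $\E_p$, guarantees the dual optimum is attained, so no closure is needed. One caveat, which you share with the paper: with the polar as defined in (\ref{def:polar}), i.e.\ $\langle x,y\rangle\le 1$ for all $x\in K$, the condition you derive, $\langle x,y\rangle\ge -1$ for all $y\in\E_G$, characterizes $-\E_G^\circ$ rather than $\E_G^\circ$. Since $\E_G$ is not centrally symmetric in general (e.g.\ $G=K_3$, where $\tfrac{2}{3}(1,1,1)$ lies in the displayed set but pairs to $2>1$ against the all-ones correlation matrix), the two sets genuinely differ, and the identity is literally correct only after replacing $X_E=x$ by $X_E=-x$. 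This sign is immaterial for the paper's applications, since the facet-ray identification property is defined up to sign, but you should not call $\langle x,y\rangle\ge-1$ ``the polar condition'' without flagging the change of convention.
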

An immediate consequence of Theorem~\ref{dual body theorem} is that the extreme points in $\E_G^\circ$ are projections of extreme matrices in $\symG$ (recall that a subset $F$ of a convex set $K$ is called an \emph{extreme set} of $K$ if, for all $x\in F$ and $a,b\in K$, $x = (a+b)/2$ implies $a,b\in F$; 
so an \emph{extreme point} is any point in the set that does not lie on the line segment between any two distinct points of $K$).  

With Theorem~\ref{dual body theorem} in hand, the identification of extremal rays of $\symG$ via facets of $\cutpm{G}$ is guided by the following geometry.  
Since $\E_G$ is a positive semidefinite relaxation of $\cutpm{G}$, then $\cutpm{G}\subset\E_G$.  
If all singular points on the boundary of $\E_G$ are also singular points on the boundary of $\cutpm{G}$, then the supporting hyperplanes of facets of $\cutpm{G}$ will be translations of supporting hyperplanes of \emph{regular extreme points} of $\E_G$ or \emph{facets} of $\E_G$, i.e.~extreme sets of $\E_G$ with positive Lebesgue measure in a codimension one affine subspace of the ambient space.  
It follows that the outward normal vectors to the facets of $\cutpm{G}$ generate the normal cones to these regular points and facets of $\E_G$.  
Dually, the facet-normal vectors of $\cutpm{G}$ are then extreme points of $\E_G^\circ$, and consequently projections of extreme matrices of $\symG$.  
Thus, we can expect to find extremal matrices in $\symG$ whose off-diagonal entries are given by the facet-normal vectors of $\cutpm{G}$.  
This motivates the following definition.  

	\begin{defn}
	\label{facet-ray identification property}
	Let $G$ be a graph.  
	For each facet $F$ of $\cutpm{G}$ let $\alpha^F\in\R^E$ denote the normal vector to the supporting hyperplane of $F$.  
	We say that $G$ has the {\bf facet-ray identification property} (or FRIP) if for every facet $F$ of $\cutpm{G}$ there exists an extremal matrix $M=[m_{ij}]$ in $\symG$ for which either $m_{ij}=\alpha^F_{ij}$ for every $\{i,j\}\in E(G)$ or $m_{ij}=-\alpha^F_{ij}$ for every $\{i,j\}\in E(G)$.
	\end{defn}

An explicit example of facet-ray identification and its geometry is presented in Section~\ref{four cycle example}.  
With this example serving as motivation, our main goal is to identify interesting collections of graphs exhibiting the facet-ray identification property.  
Using the combinatorics of cutsets as well as the tools developed by Agler et~al. in \cite{agler}, we will prove the following theorem in Section~\ref{facet-ray identification for the cycle}.
\begin{thm}
\label{main theorem graphs with no K_5 minors}
Graphs without $K_5$ minors have the facet-ray identification property.
\end{thm}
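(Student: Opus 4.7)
The plan is to reduce to the case where $G$ is a single cycle and then invoke the FRIP for cycles that will be established separately. The reduction rests on the classical theorem of Barahona and Mahjoub (see Part~V of \cite{deza}): for any graph $G$ without a $K_5$ minor, $\cutpm{G}$ is described entirely by the box inequalities $-1 \leq x_e \leq 1$ together with the \emph{cycle inequalities}
\[
\sum_{e \in C \setminus F'} x_e - \sum_{e \in F'} x_e \leq |C| - 2,
\]
ranging over cycles $C$ of $G$ and odd-size subsets $F' \subseteq E(C)$. Among these, the facet-defining ones come from single edges (the box inequalities) and from \emph{chordless} cycles of $G$, which naturally splits the argument into two cases.

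For a box facet $x_e = \pm 1$ on $e = \{i,j\}$, the rank-one matrix $M = (e_i \mp e_j)(e_i \mp e_j)^T$ is an extreme ray of $\psdcone$, lies in $\symG$, and has off-diagonal support exactly on $e$ with the correct sign, so it witnesses FRIP here. For a cycle facet $F$ coming from a chordless cycle $C \subseteq G$ and odd $F' \subseteq E(C)$, the normal vector $\alpha^F$ is supported on $E(C)$ and equals, up to sign, the corresponding facet normal of $\cutpm{C}$. Invoking the cycle FRIP, let $M_C \in \s^{|V(C)|}_{\succeq 0}(C)$ be an extremal matrix whose off-diagonal entries match $\alpha^F|_{E(C)}$, and extend $M_C$ to a $p\times p$ matrix $M$ by padding with zero rows and columns indexed by $V(G)\setminus V(C)$. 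Because $C$ is chordless, $E(G)$ contains no edge inside $V(C)$ other than those in $E(C)$, so the off-diagonal entries of $M$ reproduce $\alpha^F$ on every edge of $G$ and $M \in \symG$.

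Extremality is preserved under this padding: if $M = A + B$ with $A, B \in \symG$, then $M_{vv}=0$ for $v \notin V(C)$, and since $A,B \succeq 0$ their $v$-th row and column must also vanish, so the restrictions of $A$ and $B$ to $V(C)\times V(C)$ decompose $M_C$ inside $\s^{|V(C)|}_{\succeq 0}(C)$. By extremality of $M_C$ these restrictions are nonnegative scalar multiples of $M_C$, hence $A$ and $B$ are nonnegative multiples of $M$, confirming that $M$ is extremal in $\symG$.

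The substantive obstacle is the cycle case itself, which lies outside this reduction. There, for each odd $F' \subseteq E(C_n)$ one must exhibit an explicit extremal matrix of $\s^n_{\succeq 0}(C_n)$ of rank $n-2$ whose off-diagonal signs follow the prescribed $\pm 1$ pattern on the cycle edges. I would attempt this by combining the cutset combinatorics on $C_n$ with the Agler--Helton--McCullough--Rodman parametrization of extreme rays of $\symG$ via kernel/subspace configurations from \cite{agler}: the target rank $n-2$ agrees with the order of $C_n$ computed in \cite{helton}, which means the construction must land on a top-rank extreme ray rather than on a lower-rank extreme matrix with the same sign pattern, and the kernel configuration must be compatible with the evenness constraint on the cycle that underlies the odd-cut structure of $F'$.
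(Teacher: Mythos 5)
Your reduction is sound and matches the paper's own strategy: by Barahona--Mahjoub the facets of $\cutpm{G}$ for a graph without $K_5$ minors come only from single edges and from odd subsets of chordless cycles; the edge facets are witnessed by explicit rank-one matrices; and the cycle facets are witnessed by taking an extremal matrix for the induced cycle and padding with zeros. Your justification that zero-padding preserves extremality (a zero diagonal entry forces the corresponding row and column of any PSD summand to vanish, and chordlessness guarantees the restrictions land in $\s^{|V(C)|}_{\succeq0}(C)$) is correct, and is in fact spelled out more carefully than in the paper, which instead pads the Gram representation with zero vectors and re-applies the frame-space criterion of Theorem~\ref{agler thm}, the point being that pairs involving a zero vector contribute nothing to $U_{\nonedge}$.

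However, there is a genuine gap: the cycle case, which you yourself flag as ``the substantive obstacle,'' is the technical heart of the theorem, and your proposal offers only a direction of attack rather than a proof. One must exhibit, for every $m$-cycle and every odd $F'\subseteq E(C_m)$, an extremal matrix in $\s^{m}_{\succeq0}(C_m)$ of rank $m-2$ whose off-diagonal entries realize the prescribed $\pm1$ pattern; knowing from \cite{helton} that $\ord(C_m)=m-2$ does not by itself produce an extremal matrix with a \emph{prescribed} sign pattern. The paper does this in two steps. First, for a maximal odd subset it writes down an explicit $(m-2)$-dimensional Gram representation ($u_i=e_{m-2}$, $u_{i+1}=\sum_{j}(-1)^{j+1}e_j$, $u_{i+2}=e_1$, and $u_j=e_{j-i-2}+e_{j-i-1}$ for the remaining indices) and verifies extremality via Theorem~\ref{agler thm} by checking that the $\binom{m}{2}-m=\binom{m-1}{2}-1$ matrices in $U_{\nonedge}$ are linearly independent, so the frame space has exactly the required rank. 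Second, it reaches every other odd subset $F'$ by a switching argument: the set of cycle edges on which the sign must be flipped is an even subset of $E(C_m)$, hence a cutset $\delta(U)$, and negating the Gram vectors $u_t$ with $t\in U$ flips the signs on exactly those edges while changing neither the rank of the matrix nor the span of $U_{\nonedge}$. Until this construction (or an equivalent one) is supplied, the claim for cycles --- and with it the theorem --- remains unestablished.
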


Recall that a cycle subgraph of a graph $G$ is called \emph{chordless} if it is an induced subgraph of $G$.  
For graphs without $K_5$ minors the facet-defining hyperplanes of $\cutpm{G}$ are of the form $\langle \alpha, x\rangle = b$, where $b=1$ or  $b = p-2$ for $C_p$ a chordless cycle of $G$ \cite{BM86}.   
In \cite{agler}, it is shown that the $p$-cycle $C_p$ is a \emph{$(p-2)$-block}, meaning that if $C_p$ is an induced subgraph of $G$, then the sparsity order of $G$ is at least $p-2$.  
Since the facets of $\cutpm{G}$ are given by the chordless cycles in $G$, then Theorem~\ref{main theorem graphs with no K_5 minors} demonstrates that this condition arises via the geometry of the cut polytope $\cutpm{G}$.  
That is, since the elliptope $\E_G$ is a positive semidefinite relaxation of $\cutpm{G}$ we can translate the facet-supporting hyperplanes of $\cutpm{G}$ to support points on $\E_G$.  
By Theorem~\ref{dual body theorem} these supporting hyperplanes correspond to points in $\E_G^\circ$, and by Theorem~\ref{main theorem graphs with no K_5 minors} we see that these points are all extreme.  
In this way, the lower bound on sparsity order of $G$ given by the chordless cycles is a consequence of the relationship between the chordless cycles and the facets of $\cutpm{G}$.  
In the case that $G$ is a series-parallel graph, we will prove in Section~\ref{sec:series-parallel} that the facets of $\cutpm{G}$ in fact determine \emph{all} possible extremal ranks of $\symG$.  
\begin{thm}
	\label{main theorem characterization of all extremal ranks}
	Let $G$ be a series-parallel graph. 
	The constant terms of the facet-defining hyperplanes of $\cutpm{G}$ characterize the ranks of extremal rays of $\symG$.  
	These ranks are $1$ and $p-2$ where $C_p$ is any chordless cycle in $G$.
	Moreover, the sparsity order of $G$ is $p^*-2$ where $p^*$ is the length of the largest chordless cycle in $G$.  
	\end{thm}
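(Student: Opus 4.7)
The plan is to prove Theorem~\ref{main theorem characterization of all extremal ranks} in two directions. Since every series-parallel graph is $K_4$-minor-free and hence $K_5$-minor-free, Theorem~\ref{main theorem graphs with no K_5 minors} combined with the Barahona--Mahjoub description of the facets of $\cutpm{G}$ \cite{BM86} gives the achievability direction at once: each facet-defining hyperplane $\langle \alpha, x\rangle = b$ of $\cutpm{G}$ satisfies $b=1$ (for the edge inequalities) or $b=p-2$ where $C_p$ is a chordless cycle of $G$ (for the cycle inequalities), and the FRIP promotes each such facet to an extremal ray of $\symG$ of rank exactly $b$. Consequently, $\symG$ admits extremal rays of rank $1$ and of rank $p-2$ for every chordless cycle $C_p\subseteq G$, and the sparsity order is at least $p^*-2$.

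For the upper bound---that no extremal rank outside $\{1\}\cup\{p-2 : C_p\text{ chordless cycle in }G\}$ can occur, and in particular that the sparsity order equals $p^*-2$---the plan is to induct on $|V(G)|$ using the recursive structure of series-parallel graphs. Every 2-connected series-parallel graph is either a single edge, a chordless cycle, or decomposes as a 2-sum $G = G_1 \oplus_e G_2$ along some edge $e$ into smaller series-parallel graphs $G_1,G_2$. For such a 2-sum I would establish a clique-sum lemma in the spirit of Agler et~al.~\cite{agler}: any extremal matrix $X\in\symG$ can be analyzed via its restrictions $X_i\in\mathcal{S}^{p_i}_{\succeq0}(G_i)$, and $\rank(X)$ is controlled by $\rank(X_1),\rank(X_2)$ together with the rank behavior on the separating edge $e$. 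Observing that the chordless cycles of $G$ are precisely the chordless cycles of $G_1$ together with those of $G_2$---since in a $K_5$-minor-free graph any cycle crossing the 2-separator $\{u,v\}$ either uses the edge $e$ and stays on one side, or is short-circuited by $e$ as a chord---the inductive hypothesis then constrains $\rank(X)$ to the target set.

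The main obstacle is precisely this clique-sum lemma: controlling both the rank and the extremality of $X\in\symG$ from its restrictions to the two pieces of the 2-sum. The restriction of an extremal matrix need not itself be extremal in the smaller cone, and conversely not every compatible gluing of matrices from $\mathcal{S}^{p_1}_{\succeq0}(G_1)$ and $\mathcal{S}^{p_2}_{\succeq0}(G_2)$ produces an extremal matrix of $\symG$, so the argument must carefully track how positive semidefiniteness, zero-support, and the FRIP certificate interact across the separating edge $e$. The base cases are already available in the literature: chordal graphs (including single edges) have sparsity order $1$ by \cite{agler}, and the chordless cycle $C_p$ has sparsity order exactly $p-2$ by \cite{helton}, both matching the allowed set.
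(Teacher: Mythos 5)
Your achievability half coincides with the paper's: Theorem~\ref{main theorem graphs with no K_5 minors} together with the Barahona--Mahjoub facet description \cite{BM86} produces extremal rays of rank $1$ and of rank $p-2$ for every chordless cycle $C_p$. The divergence, and the gap, is in the converse. The paper does not induct on a decomposition at all; it dualizes. By the cycle-completability theorem of Barrett--Johnson--Loewy \cite{BJL96}, for series-parallel $G$ the dual cone $\CC_G$ of PSD-completable partial matrices is cut out by the conditions that $A[C_m]$ be completable for every chordless cycle $C_m$; thus $\CC_G$ is simultaneously the dual of $\symG$ and the dual of the cone generated by the zero-padded cones $\s^m_{\succeq0}(C_m)$. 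Hence these two cones coincide, every extreme ray of $\symG$ lies in some $\s^m_{\succeq0}(C_m)$ (or is a rank-one edge matrix), and the admissible ranks are exactly $1$ and $m-2$ by \cite{agler,helton}. This is a genuinely different, and much shorter, route than the one you propose.

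Your inductive route has a concrete structural flaw beyond the admitted unproven lemma. A $2$-connected series-parallel graph that is neither an edge nor a cycle need not split as a clique sum over an \emph{actual} edge of $G$: the theta graph (two vertices $u,v$ joined by three internally disjoint paths, each of length at least $2$) has no $2$-separator that is an edge. You are therefore forced to use $2$-sums over a \emph{virtual} edge $uv\notin E(G)$, and then both of your key assertions fail: (i) the separator $\{u,v\}$ is not a clique of $G$, so no clique-sum decomposition of Agler--Helton type applies --- indeed $C_4$ is such a $2$-sum of two triangles with the joining edge deleted, and its rank-$2$ extremal ray is invisible in either triangle; and (ii) chordless cycles of $G$ do cross the separator (every chordless cycle of the theta graph uses paths from both sides), so the inductive hypothesis does not even predict the correct rank set. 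Even in the genuine clique-sum case, the result you invoke from \cite{helton} controls only the \emph{maximum} extremal rank, not the full set of extremal ranks, so the lemma you flag as the main obstacle would have to be the stronger statement that $\symG$ decomposes as the sum of the two sub-cones. As written, the upper-bound half of your argument does not go through.
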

The remainder of this article is organized as follows:  
In Section~\ref{preliminaries} we recall some of the previous results on sparsity order and cut polytopes that will be fundamental to our work.  
Then in Section~\ref{the geometric correspondence}, we describe the geometry underlying the facet-ray identification property.
We begin the section with the motivating example of the $4$-cycle, in which we explicitly illustrate the geometry described above.  
We then provide a proof of Theorem~\ref{dual body theorem} and discuss how this result motivates the definition of the facet-ray identification property.  
In Section~\ref{facet-ray identification for graphs with no K_5 minors}, we demonstrate that any graph without $K_5$ minor has the facet-ray identification property, thereby proving Theorem~\ref{main theorem graphs with no K_5 minors}.  
We then identify the ranks of the corresponding extremal rays.  
In Section~\ref{characterization of all extremal ranks for series-parallel graphs}, we prove Theorem~\ref{main theorem characterization of all extremal ranks}, showing that if $G$ is also series-parallel then the facets are enough to characterize all extremal rays of $\symG$.
Finally, in Section~\ref{graphs without the facet-ray identification property}, we discuss how to identify graphs that do not have the facet-ray identification property.


\section{Preliminaries.}
\label{preliminaries}


\subsection{Graphs.}
\label{graph preliminaries}
For a graph $G$ with vertex set $[p]$ and edge set $E$ we let $\nonedge$ denote the set of \emph{nonedges} of $G$, that is, all unordered pairs $\{i,j\}$ for which $i,j\in[p]$ but $\{i,j\}\notin E$.
Then we define the \emph{complement of $G$} to be the graph $G^c$ on the vertex set $[p]$ with edge set $\nonedge$.  
Recall that a subgraph of $G$ is any graph $H$ whose vertex set is a subset of $[p]$ and whose edge set is a subset of $E$.  
A subgraph $H$ of $G$ with edge set $E^\prime$ is called \emph{induced} if there exists a subset $V^\prime\subset[p]$ such that the vertex set of $H$ is $V^\prime$ and $E^\prime$ consists of all edges of $G$ connecting any two vertices of $V^\prime$.   
We let $K_p$ denote the complete graph on $p$ vertices, $C_p$ denote the cycle on $p$ vertices, and $K_{m,n}$ denote the complete bipartite graph where the vertex set is the disjoint union of $[m]$ and $[n]$.  
A subgraph $H$ of $G$ is called a \emph{chordless cycle} if $H$ is an induced cycle subgraph of $G$.  
A graph $G$ is called \emph{chordal} if every chordless cycle in $G$ has at most three edges.  
We can \emph{delete} an edge of $G$ by removing it from the edge set $E$, and \emph{contract} an edge $\{i,j\}$ of $G$ by identifying the two vertices $i$ and $j$ and deleting any multiple edges introduced by this identification.  
Similarly, we \emph{delete} a vertex of $G$ by removing it from the vertex set of $G$ as well as all edges of $G$ attached to it.  
A graph $H$ is called a \emph{minor} of $G$ if $H$ can be obtained from $G$ via a sequence of edge deletions, edge contractions, and vertex deletions.


\subsection{Sparsity order of $G$.}
\label{sparsity order preliminaries}
We are interested in $\symG$, the closed convex cone consisting of all $p\times p$ positive semidefinite matrices with zeros in the $ij^{th}$ entry for all $\{i,j\}\in\nonedge$.  
Recall that a matrix $X\in\symG$ is \emph{extremal} in $\symG$ if it lies on an extreme ray of $\symG$.  
The \emph{(sparsity) order} of $G$, denoted $\ord(G)$, is the maximum rank of an extremal matrix in $\symG$.  
In \cite{agler} the authors develop a general theory for studying graphs $G$ with high sparsity order.  
Fundamental to their theory is the so-called \emph{dimension theorem}, which is stated in terms of the expression of a positive semidefinite matrix as the Gram matrix for a collection of vectors.
Recall that a  (real) $p\times p$ matrix $X=[x_{ij}]$ is positive semidefinite if and only if there exist vectors $u_1,u_2,\ldots,u_p\in\R^k$ such that $x_{ij}=u_i^Tu_j$.  
The sequence of vectors $(u_1,\ldots,u_p)$  is called a ($k$-dimensional) \emph{Gram representation} of $X$, and if $X$ has rank $k$ this sequence of vectors is unique up to orthogonal transformation.  
Following the notation of \cite{laurent1}, for a subset $A\subset E\cup\nonedge$ define the set of $p\times p$ matrices 
	$$U_A:=\{u_iu_j^T+u_ju_i^T : \{i,j\}\in A\}.$$
The real span of $U_{\nonedge}$ is a subspace of the trace zero $k \times k$ real symmetric matrices that we call the \emph{frame space of $X$}.  
The following theorem proven in~\cite{agler} says that a matrix is extremal in $\symG$ if and only if this frame space is the entire trace zero subspace of $\s^k$.  

\begin{thm}\cite[Corollary 3.2]{agler}
\label{agler thm}
Let $X\in \symG$ with rank $k$ and $k$-dimensional Gram representation $(u_1,\ldots,u_p)$.  Then $X$ is extremal if and only if 
	$$\rank\left(U_{\nonedge}\right)={k+1\choose 2}-1.$$
\end{thm}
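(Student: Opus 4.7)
The plan is to reduce the extremality of $X$ in $\symG$ to computing the dimension of a linear subspace $L \subseteq \s^k$, and then read off that dimension from $\rank(U_{\nonedge})$.

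Let $U$ be the $p \times k$ matrix whose $i$-th row is $u_i^T$, so that $X = UU^T$ and $U$ has full column rank $k$. The standard face description of the PSD cone shows that every $A \in \s^p_{\succeq 0}$ with $0 \preceq A \preceq X$ has the form $A = UMU^T$ for a uniquely determined $M \in \s^k_{\succeq 0}$ with $I - M \succeq 0$. Imposing the extra constraint $A \in \symG$ becomes $(UMU^T)_{ij} = u_i^T M u_j = 0$ for every nonedge $\{i,j\}$, which, by symmetry of $M$, is equivalent to $\langle M,\, u_iu_j^T + u_ju_i^T \rangle = 0$. Thus $A \in \symG$ iff $M$ lies in the linear subspace
\[
L := \{ M \in \s^k : \langle M, S \rangle = 0 \text{ for every } S \in U_{\nonedge} \}.
\]
Note that $I \in L$, since $\langle I, u_iu_j^T + u_ju_i^T \rangle = 2 u_i^T u_j = 2X_{ij} = 0$ for every nonedge.

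By definition, $X$ fails to be extremal in $\symG$ iff there exists $A \in \symG$ with $0 \preceq A \preceq X$ that is not a scalar multiple of $X$. Via the parametrization above, this is equivalent to the existence of $M \in L$ with $0 \preceq M \preceq I$ that is not a scalar multiple of $I$. I would argue both directions by a positivity-perturbation argument. If $\dim L = 1$, then $L = \spn(I)$, and the only elements of $L$ lying in the Loewner interval $[0, I]$ are nonnegative scalar multiples of $I$, so $X$ is extremal. Conversely, if $\dim L \geq 2$, pick $M_0 \in L$ not proportional to $I$, and replace $M_0$ by $M_0 - \tfrac{\tr M_0}{k} I \in L$ so that $\tr M_0 = 0$ and $M_0 \neq 0$. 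For small $\epsilon > 0$, the matrix $M := \tfrac{1}{2}(I + \epsilon M_0) \in L$ lies strictly in $(0, I)$ and is not proportional to $I$, so $A := UMU^T \in \symG$ certifies nonextremality of $X$.

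Combining the two directions, $X$ is extremal in $\symG$ iff $\dim L = 1$. Since $L$ is the orthogonal complement in $\s^k$ of $\spn(U_{\nonedge})$ with respect to the trace inner product, $\dim L = \binom{k+1}{2} - \rank(U_{\nonedge})$, and setting this equal to $1$ yields the claimed identity. The main subtlety I expect is the face-structure step, namely justifying that every PSD matrix below $X$ factors uniquely as $UMU^T$ with $M \succeq 0$; this relies on $A \preceq X$ forcing $\mathrm{range}(A) \subseteq \mathrm{range}(X)$ (standard for PSD matrices) together with $U$ having full column rank. The perturbation step is then routine once one observes that $I \pm \epsilon M_0 \succ 0$ for small $\epsilon$ since $I$ is positive definite.
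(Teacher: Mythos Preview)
Your argument is correct and complete. Note, however, that the paper does not actually prove this theorem: it is quoted as \cite[Corollary~3.2]{agler} and used as a black box, so there is no in-paper proof to compare against. Your proposal is essentially the standard proof one finds in the Agler--Helton--McCullough--Rodman paper: parametrize the face of $\psdcone$ through $X$ by $A=UMU^T$ with $M\in\s^k$, observe that the $\symG$ constraint on $A$ becomes orthogonality of $M$ to $\spn(U_{\nonedge})$, and conclude that extremality is equivalent to $\spn(U_{\nonedge})^\perp=\spn(I)$. The one point worth making explicit (you allude to it) is that when $A\in\symG$ satisfies $0\preceq A\preceq X$, the complement $X-A$ automatically lies in $\symG$ as well, since both $X$ and $A$ vanish on nonedges; this is what lets you test extremality in $\symG$ using the PSD Loewner order rather than some intrinsic cone order.
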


Furthermore, in \cite{agler} it is shown that $\ord(G) = 1$ if and only if $G$ is a chordal graph.  
Using Theorem~\ref{agler thm}, the authors then develop a general theory for detecting existence of higher rank extremals in $\symG$ based on a fundamental collection of graphs.  
A graph $G$ is called a \emph{$k$-block} provided that $G$ has order $k$ and no proper induced subgraph of $G$ has order $k$.  
The $k$-blocks are useful for identifying higher rank extremals since if $H$ is an induced subgraph of $G$ then $\ord(H)\leq\ord(G)$ \cite{agler}.  
In \cite{agler} it is also shown that the cycle on $p$ vertices is a $(p-2)$-block.  
A particularly important collection of $k$-blocks are the $k$-superblocks, the $k$-blocks with the maximum number of edges on a fixed vertex set.  
Formally, a \emph{$k$-superblock} is a $k$-block whose complement has precisely $(k^2+k-2)/2$ edges.  
Understanding the $k$-blocks and $k$-superblocks is equivalent to understanding their complements.  
In \cite[Theorem 1.5]{agler} the $3$-blocks are characterized in terms of their complement graphs, and in \cite[Theorem 0.2]{HLW} the $4$-superblocks are characterized in a similar fashion.  

In related works the structure of the graph $G$ is again used to describe the extreme ranks of $G$.  
In \cite{helton} it is shown that if $G$ is a clique sum of two graphs $G_1$ and $G_2$ then $\ord(G)=\max\{\ord(G_1),\ord(G_2)\}$, and $\ord(G) \leq p-2$ with equality if and only if $G$ is a $p$-cycle.  
Similarly, in \cite{grone}  the order of the complete bipartite graph $K_{p,m}$ is determined and it is shown that all ranks $1,2,\ldots,\ord(K_{p,m})$ are extremal.    


\subsection{The cut polytope of $G$.}
\label{cut polytope preliminaries}
First recall that to define the cut polytope in the variables $0$ and $1$ we assign to each cutset $\delta(U)$ a $(0,1)$-vector $x^{\delta(U)}\in\R^E$ with a $1$ in coordinate $x_e$ if and only if $e\in \delta(U)$.  
The polytope $\cut{G}$ is the convex hull of all such vectors, and it is affinely equivalent to $\cutpm{G}$ under the coordinate-wise transformation $x_e \mapsto 1-2x_e$ on $\R^E$.  
In order to prove that a graph $G$ has the facet-ray identification property we need an explicit description of the facet-supporting hyperplanes of $\cutpm{G}$, or equivalently, those of $\cut{G}$.  
For the complete graph $K_p$ one of the most interesting classes of valid inequalities for $\cutpm{G}$ are the \emph{hypermetric inequalities}.  
For an integer vector $b = (b_1,\ldots, b_p)$ satisfying $\sum_{i = 1}^p b_i = 1$ we call 
$$
\sum_{1\leq i < j \leq p}b_ib_j\leq \sum_{1\leq i < j\leq p}b_ib_jx_{ij}
$$ 
the \emph{hypermetric inequality} defined by $b$. 
Notice that every facet-supporting hypermetric inequality identifies an extreme ray in $\s^p_{\succeq0}=\s^p_{\succeq0}(K_p)$. However, despite the large collection of hypermetric inequalities, not all complete graphs have the facet-ray identification property. 
Moreover, since the only extreme rank of $\s^p_{\succeq0}$ is $1$, we are mainly interested in facet-defining inequalities that identify higher rank extreme rays for sparse graphs.  
The hypermetric inequalities generalize a collection of facet-defining inequalities of $\cutpm{G}$ called the \emph{triangle inequalities}, i.e. the hypermetric inequalities defined by $b = (1,1,-1)$.  
The triangle inequalities admit a second generalization to a collection of facet-defining inequalities for sparse graphs as follows: 
Let $C_m$ be a cycle in a graph $G$ and let $F\subset E(C_m)$ be an odd cardinality subset of the edges of $C_m$.   
The inequality 
$$
\sum_{e\in E(C_m)\backslash F}x_e - \sum_{e\in F}x_e \leq p-2
$$ 
is called a \emph{cycle inequality}.  
Using these inequalities Barahona and Mahjoub~cite{BM86} provide a linear description of $\cutpm{G}$ for any graph without $K_5$ minors.  
\begin{thm}
\label{barahona theorem}
\cite[Barahona and Mahjoub]{BM86}
Let $G$ be a graph with no $K_5$ minor.  
Then $\cutpm{G}$ is defined by the collection of hyperplanes
\begin{enumerate}[(1)]
	\item $-1\leq x_e\leq 1$ for all $e\in E(G)$, and 
	\item $\sum_{e\in E(C_m)\backslash F}x_e-\sum_{e\in F}x_e \leq m-2$ for all chordless cycles $C_m$ of $G$ and any odd cardinality subset $F\subset E(C_m)$.  
\end{enumerate}
\end{thm}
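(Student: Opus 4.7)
The plan is to establish the theorem in two stages: first verify validity of the listed inequalities for every cut vector, then show they suffice to describe $\cutpm{G}$ by reducing to the planar case through a structural decomposition of $K_5$-minor-free graphs. Validity of the box inequalities $-1\leq x_e\leq 1$ is immediate since every cut vector $x^{\delta(U)}$ has entries in $\{-1,+1\}$. For the cycle inequality attached to a chordless cycle $C_m$ and an odd subset $F\subseteq E(C_m)$, I would use the standard fact that a cutset meets any cycle in an even number of edges. Letting $a$ and $b$ count the edges of $E(C_m)\setminus F$ and of $F$ respectively that lie in $\delta(U)$, one has $a+b$ even while $|F|$ is odd, so $|F|+a-b$ is an odd integer; since $0\leq b\leq |F|$ and $a\geq 0$, it is also nonnegative and therefore at least $1$. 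Rearranging yields $m-2|F|+2(b-a)\leq m-2$, which is the cycle inequality.

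For completeness I would invoke Wagner's structure theorem: every $K_5$-minor-free graph is built from planar graphs and the single exceptional graph $V_8$ by iterated clique sums along cliques of size at most three. The key lemma is that such clique sums preserve the description of the cut polytope by the box and chordless-cycle inequalities. Concretely, if $G$ is the $k$-clique sum of $G_1$ and $G_2$ with $k\leq 3$, every facet-defining inequality of $\cutpm{G}$ lifts a facet-defining inequality of $\cutpm{G_1}$ or of $\cutpm{G_2}$, because cuts of $G$ are obtained by independently choosing cuts on $G_1$ and $G_2$ that agree on the shared clique, and the parity conditions for such agreement along a clique of size at most three are fully captured by cycle inequalities of the summands. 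Moreover, every chordless cycle of $G$ lies entirely in $G_1$ or in $G_2$. Together these facts reduce the problem to the planar case, together with a direct check on $V_8$.

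For the planar case I would exploit planar duality: in a planar graph $G$ with planar dual $G^*$, cuts of $G$ correspond bijectively to even edge subsets of $G^*$, that is, to $T$-joins of $G^*$ with $T=\emptyset$. Under this correspondence, combined with the affine map $x_e\mapsto(1-x_e)/2$ carrying $\cutpm{G}$ to $\cut{G}$, the cut polytope becomes the convex hull of $T$-join incidence vectors in $G^*$, whose linear description is given by the Edmonds--Johnson theorem via nonnegativity, capacity, and odd-$T$-cut inequalities. Translating back through duality converts the odd-cut inequalities in $G^*$ into exactly the cycle inequalities over chordless cycles of $G$ and the capacity constraints into the box inequalities, yielding the theorem in the planar setting.

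The hardest part will be the clique-sum lifting lemma. One must verify that no genuinely new facet is produced at the shared clique and that any cycle inequality involving edges from both summands can be rewritten as a sum of inequalities confined to a single summand, modulo the equations that hold automatically on the shared clique. This step leans essentially on the hypothesis that the glued clique has size at most three, since gluing along a $K_4$ can produce facet-defining inequalities that are not cycle inequalities (as witnessed by $K_5$ itself). It is the technical heart of the Barahona--Mahjoub argument and requires careful parity bookkeeping for cuts crossing the shared clique, with the explicit planar description serving as the base case.
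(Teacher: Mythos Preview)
The paper does not prove this statement at all: Theorem~\ref{barahona theorem} is quoted from \cite{BM86} as a preliminary result and is used without proof. There is therefore nothing in the paper against which to compare your argument.

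That said, your sketch is essentially the strategy of the original Barahona--Mahjoub proof. Your validity computation is correct. The reduction via Wagner's decomposition into planar pieces and copies of $V_8$ glued along cliques of size at most three, together with the planar case handled by $T$-join duality and the Edmonds--Johnson description, is indeed the backbone of their argument. Two points are worth tightening if you intend this as a self-contained proof. First, the claim that every chordless cycle of a clique sum lies entirely in one summand depends on the convention that the edges of the shared clique are retained; you should make that explicit, since Wagner's theorem is often stated allowing deletion of some clique edges, and in that formulation new chordless cycles can genuinely straddle both sides. Second, the clique-sum lifting step is not merely ``parity bookkeeping'': one must show that the system of box and cycle inequalities is closed under $k$-sums for $k\le 3$, which in \cite{BM86} is done by an explicit facet analysis (and is where the bound $k\le 3$ enters essentially). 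Your outline correctly identifies this as the technical heart, but as written it is an assertion rather than an argument.
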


Suppose that $C_p$ is a chordless cycle in a graph $G$ without $K_5$ minors.  
For an odd cardinality subset $F\subset E(C_p)\subset E(G)$ define the vector $v^F\in \R^E$, where
$$
v^F_e=
\begin{cases} 
-1 & \mbox{ if $e\in F$}, \\
 1 & \mbox{ if $e\in E(C_m)\backslash F$}, \\
 0 & \mbox{ if $e\in E(G)\backslash E(C_m)$}. 
\end{cases}
$$
Similarly, let $v^e$ denote the standard basis vector for coordinate $e\in E(G)$ in $\R^E$. 
Then by Proposition~\ref{barahona theorem} we see that the facet-supporting hyperplanes of $\cutpm{G}$ are
\begin{enumerate}[(1)]
	\item $\langle \pm v^e, x \rangle=1$ for all $e\in E$, and
	\item $\langle v^F, x\rangle = m-2$ for all odd cardinality subsets $F\subset E(C_m)$ for all chordless cycles $C_m$ in $G$.  
\end{enumerate} 

In Section~\ref{facet-ray identification for graphs with no K_5 minors}, we identify for each facet-supporting hyperplane $\langle \alpha,x\rangle = b$ of $\cutpm{G}$ an extremal matrix $A=[a_{ij}]$ in $\symG$ of rank $b$ in which the off-diagonal nonzero entries $a_{ij}$ are given by the coordinates $\alpha_{e}$, for $e= \{i,j\}$, of the facet normal $\alpha\in\R^E$.  
In Section~\ref{characterization of all extremal ranks for series-parallel graphs}, we then show that the ranks $b$ are \emph{all} extremal ranks of $\symG$ as long as $G$ is also series-parallel.  
To do so, it will be helpful to have the following well-known and easy to prove lemma on the cut polytope of the cycle. 
\begin{lem}
\label{cut polytope of the cycle lemma}
The vertices of $\cutpm{C_p}$ are all $(\pm1)$-vectors in $\R^E$ containing an even number of $-1$'s. 
\end{lem}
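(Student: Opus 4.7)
The plan is to reduce the lemma to a combinatorial characterization of cutsets in $C_p$ and then lift it to the vertex description of $\cutpm{C_p}$. By construction, the vertices of $\cutpm{C_p}$ are precisely the vectors $\chi^{\delta(U)}\in\{\pm1\}^E$ associated to the cuts $(U,U^c)$ of $C_p$. Hence what must be shown is that a subset $F\subseteq E(C_p)$ equals $\delta(U)$ for some $U\subseteq V(C_p)$ if and only if $|F|$ is even.

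For the forward direction, fix a cut $(U,U^c)$ and traverse the cycle $C_p$ starting at any vertex, recording for each edge whether its two endpoints lie in the same block or different blocks. Since the walk returns to the starting vertex (and hence to the same block), the number of ``block-changes'' along the way must be even; these block-changes are exactly the edges of $\delta(U)$, so $|\delta(U)|$ is even. For the backward direction, given any $F\subseteq E(C_p)$ with $|F|$ even, define $U$ constructively: choose a vertex $v_1\in V(C_p)$ and place it in $U$; then traverse the cycle and, each time an edge of $F$ is crossed, switch blocks. Because $|F|$ is even, this assignment is consistent when the walk returns to $v_1$, so $U$ is well-defined and $F=\delta(U)$ by construction.

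Translating via the map $\chi^{\delta(U)}_e=-1\iff e\in \delta(U)$ shows that the set of vectors assigned to cuts of $C_p$ is exactly the set of $(\pm1)$-vectors in $\R^E$ with an even number of $-1$'s. To conclude that each such vector is a vertex (and not merely contained in $\cutpm{C_p}$), observe that any $(\pm1)$-vector is already an extreme point of the cube $[-1,1]^E$; since $\cutpm{C_p}\subseteq[-1,1]^E$, an extreme point of the ambient cube that belongs to $\cutpm{C_p}$ is automatically an extreme point of $\cutpm{C_p}$.

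There is no serious obstacle here; the only item requiring care is verifying that the block-flipping procedure in the backward direction is well-defined, which is where the even-cardinality hypothesis on $F$ is used. The parity count can alternatively be phrased by observing that $\chi^{\delta(U)}$ is the indicator vector (with $\pm1$ encoding) of a cocycle in the cycle space $\mathbb{Z}_2^{E}/\langle \mathbf{1}_{E}\rangle$ of $C_p$, and in the single cycle $C_p$ the cocycle space is precisely the even-weight subspace of $\mathbb{Z}_2^{E}$; but the explicit construction above avoids introducing this machinery.
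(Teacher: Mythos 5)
Your proof is correct. The paper itself offers no proof of this lemma (it is dismissed as ``well-known and easy to prove''), so there is nothing to compare against; your argument is the standard one, and you handle the two points that actually require care: the parity characterization of cutsets of a cycle via the block-flipping traversal, and the observation that every $(\pm1)$-vector lying in $\cutpm{C_p}\subseteq[-1,1]^E$ is automatically a vertex because it is already extreme in the ambient cube. The only cosmetic quibble is in your closing aside, where the even-weight subspace of $\mathbb{Z}_2^E$ is the \emph{cut (cocycle) space} of $C_p$ rather than its cycle space, but since you explicitly set that reformulation aside it does not affect the argument.
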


The polytope $\cutpm{C_p}$ appears in the literature as the \emph{$p$-halfcube} or \emph{demihypercube}.


\section{The Geometry of Facet-Ray Identification}
\label{the geometric correspondence}

In this section, we examine the underlying geometry of the facet-ray identification property.  
Recall that the facet-ray identification property is defined to capture the following geometric picture.
Since $\cutpm{G}\subset\E_G$ 
then if all singular points on the boundary of $\E_G$ are also singular points on the boundary of $\cutpm{G}$, the supporting hyperplanes of facets of $\cutpm{G}$ will be translations of supporting hyperplanes of regular extreme points of $\E_G$ or facets of $\E_G$.  
It follows that the outward normal vectors to the facets of $\cutpm{G}$ generate the normal cones to these regular points and facets of $\E_G$.  
In the polar, the facet-normal vectors of $\cutpm{G}$ are then extreme points of $\E_G^\circ$, and consequently projections of extreme matrices of $\symG$.  
Thus, we can expect to find extremal matrices in $\symG$ whose off-diagonal entries are given by the facet-normal vectors of $\cutpm{G}$. 
Since the geometry of the elliptope is not at all generic this picture is, in general, difficult to describe from the perspective of real algebraic geometry. 
In Section~\ref{four cycle example} we provide this geometric picture in the case of the cycle on four vertices.  
This serves to demonstrate the difficultly of the algebro-geometric approach for an arbitrary graph $G$, and consequently motivate the combinatorial work done in the coming sections.
Following this example, we prove Theorem~\ref{dual body theorem}, the key to facet-ray identification.


\subsection{Geometry of the $4$-cycle: an example.}
	\label{four cycle example}
Consider the cycle on four vertices, $C_4$.  
For simplicity, we let $G:= C_4$, and we identify $\R^{E(G)}\simeq\R^4$ by identifying edge $\{i,i+1\}$ with coordinate $i$ for all $i=1,2,3,4$. Here we take the vertices of $C_4$ modulo 4.
By Lemma~\ref{cut polytope of the cycle lemma}, the cut polytope of $G$ is the convex hull of all $(\pm 1)$-vectors in $\R^4$ containing precisely an even number of $-1$'s.  
Equivalently, $\cutpm{G}$ is the $4$-cube $[-1,1]^4$ with truncations at the eight vertices containing an odd number of $-1$'s.  
Thus, $\cutpm{G}$ has sixteen facets supported by the hyperplanes
$$
\pm x_i =1, 
\qquad
\mbox{and}
\qquad 
\langle v^F, x \rangle = 2,
$$
where $F$ is an odd cardinality subset of $\{1,2,3,4\}$, and $v^F$ is the corresponding vertex of the 4-cube $[-1,1]^4$ with an odd number of $-1$'s.  

Proving that the 4-cycle $G$ has the facet-ray identification property amounts to identifying for each facet of $\cutpm{G}$ an extremal matrix in $\symG$  whose off-diagonal entries are given by the normal vector to the supporting hyperplane of the facet.  
For example, the facets supported by the hyperplanes $\pm x_1 = 1$ correspond to the rank $1$ extremal matrices
$$
\begin{array}{ccc}
Y = 
\begin{bmatrix}
1	&	1	&	0	&	0	\\
1	&	1	&	0	&	0	\\
0	&	0	&	0	&	0	\\
0	&	0	&	0	&	0	\\
\end{bmatrix}
& 
\mbox{and}
&

Y = 
\begin{bmatrix}
1	&	-1	&	0	&	0	\\
-1	&	1	&	0	&	0	\\
0	&	0	&	0	&	0	\\
0	&	0	&	0	&	0	\\
\end{bmatrix}.
\end{array}
$$
Similarly, the facets $\langle v^F,x\rangle = 2\,$ for $v^F = (1,-1,1,1)$ and $v^F = (1,-1,-1,-1)$ respectively correspond to the rank $2$ extremal matrices
$$
\begin{array}{ccc}
Y = \frac{1}{3}
\begin{bmatrix}
1	&	-1	&	0	&	-1	\\
-1	&	2	&	1	&	0	\\
0	&	1	&	1	&	-1	\\
-1	&	0	&	-1	&	2	\\
\end{bmatrix}
& 
\mbox{and}
&

Y = \frac{1}{3}
\begin{bmatrix}
1	&	-1	&	0	&	1	\\
-1	&	2	&	1	&	0	\\
0	&	1	&	1	&	1	\\
1	&	0	&	1	&	2	\\
\end{bmatrix}.
\end{array}
$$
As indicated by Theorem~\ref{dual body theorem}, these four matrices respectively project to four extreme points in $\E_G^\circ$, namely
$$
(1,0,0,0), \quad (-1,0,0,0), \qquad \frac{1}{3}(-1,1,-1,-1), \qquad \mbox{and} \qquad \frac{1}{3}(-1,1,1,1),
$$
with the former two being vertices of $\E_G^\circ$ (extreme points with full-dimensional normal cones) and the latter two being regular extreme points on the rank $2$ locus of $\E_G^\circ$.  
Indeed, all extreme points corresponding to the facets $\pm x_i =1$ will be rank $1$ vertices of $\E_G^\circ$, and all points corresponding to the facets $\langle v^F,x\rangle = 2$ will be rank $2$ regular extreme points of $\E_G^\circ$.  
Consequently, all sixteen points arise as projections of extremal matrices of $\symG$ of the corresponding ranks.  

To see why these sixteen points in $\E_G^\circ$ are extreme points of the specified type we examine the relaxation of $\cutpm{G}$ to $\E_G$, and the stratification by rank of the spectrahedral shadow $\E_G^\circ$.  
We compute the algebraic boundary of $\E_G$ as follows.  
The \emph{$4$-elliptope} is the set of correlation matrices 
$$
\E_4 = \left\{
(x_1,x_2,x_3,x_4,u,v)\in\R^6  :
X = \begin{pmatrix}
1	&	x_1	&	u	&	x_4	\\
x_1	&	1	&	x_2	&	v	\\
u	&	x_2	&	1	&	x_3	\\
x_4	&	v	&	x_3	&	1	\\
\end{pmatrix}
\succeq0
 \right\}.
$$
The algebraic boundary $\partial\E_4$ of $\E_4$ is defined by the vanishing of the determinant $D:=\det(X)$. The elliptope of the 4-cycle is defined as $\E_{G}:=\pi_{G}\left(\E_4\right)$ where
$$
\pi_{G}:\s^4\longrightarrow\R^{E(G)}, \qquad X_{i,i+1} \mapsto x_{i}.
$$
To identify the algebraic boundary of $\E_{G}$ we form the ideal 
$I:=\left(D,\frac{\partial}{\partial u}D,\frac{\partial}{\partial v}D\right),$
and eliminate the variables $u$ and $v$ to produce an ideal $J\subset\R[x_1,x_2,x_3,x_4]$.  
Using Macaulay2 we see that $J$ is generated by the following product of eight linear terms corresponding to the rank 3 locus,
$$
(x_1 - 1)(x_1 + 1)(x_2 - 1)(x_2 + 1)(x_3 - 1)(x_3 + 1)(x_4 - 1)(x_4 + 1),
$$
and the following sextic polynomial (with multiplicity two) corresponding to the rank 2 locus,
\begin{equation*}
\begin{split}
(&4x_1^2x_2^2x_3^2-4x_1^3x_2x_3x_4-4x_1x_2^3x_3x_4-4x_1x_2x_3^3x_4+4x_1^2x_2^2x_4^2
+4x_1^2x_3^2x_4^2+4x_2^2x_3^2x_4^2-4x_1x_2x_3x_4^3\\
&+x_1^4-2x_1^2x_2^2+x_2^4-2x_1^2x_3^2-2x_2^2x_3^2+x_3^4+8x_1x_2x_3x_4-2x_1^2x_4^2-2x_2^2x_4^2-2x_3^2x_4^2+x_4^4)^2.\\
\end{split}
\end{equation*}
The sextic factor is the \emph{$4^{th}$ cycle polynomial} $\Gamma_4^\prime$ as defined in \cite{sturmfels}. To visualize the portion of the elliptope cut out by this term we treat the variable $x_4$ as a parameter and vary it from 0 to 1.  
A few of these level curves (produced using Surfex) are presented in Figure \ref{fig: level curves}.
	\begin{figure}
	\centering
	$\begin{array}{c c c}
	\includegraphics[width=0.33\textwidth]{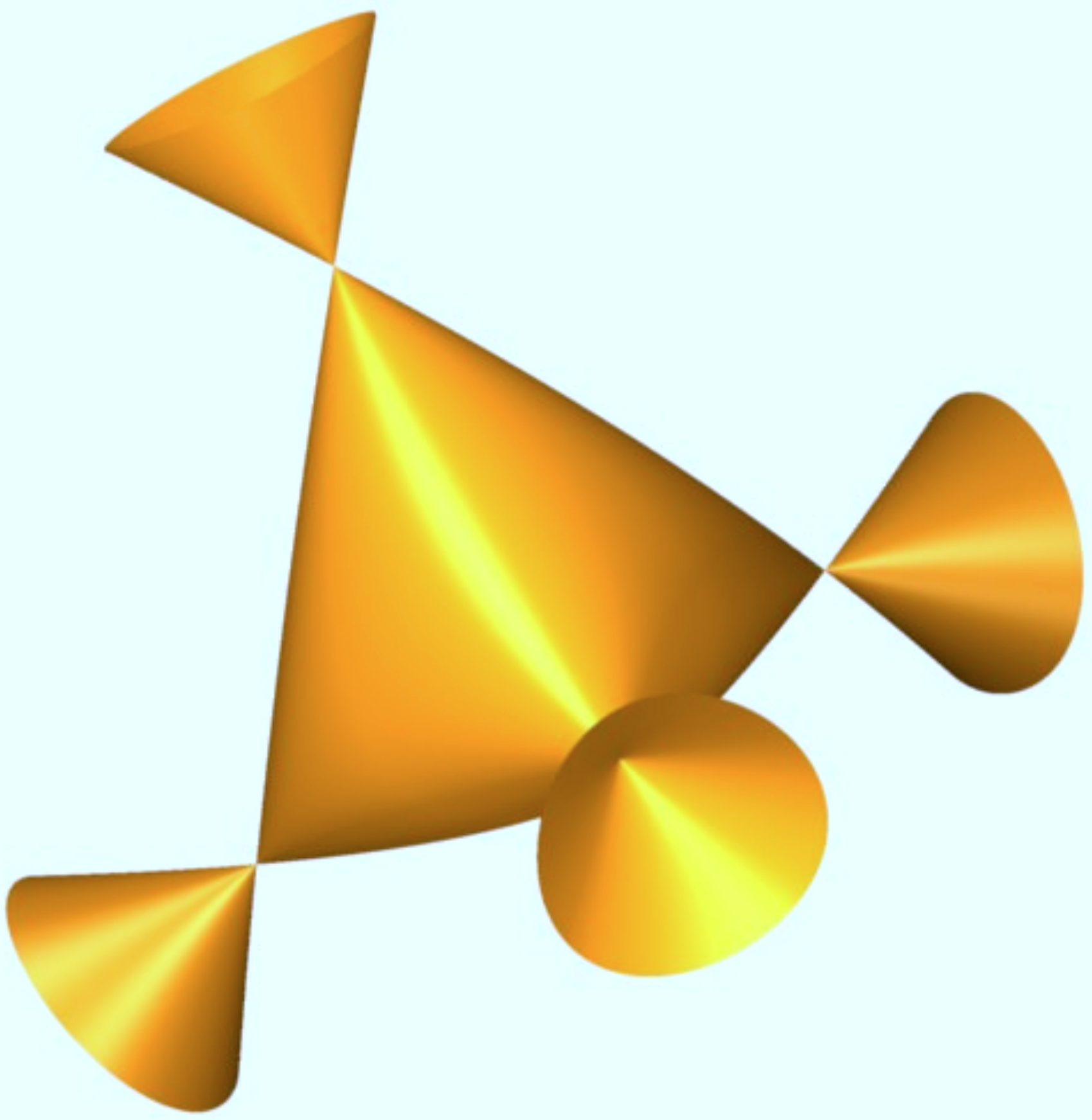} &
	\includegraphics[width=0.33\textwidth]{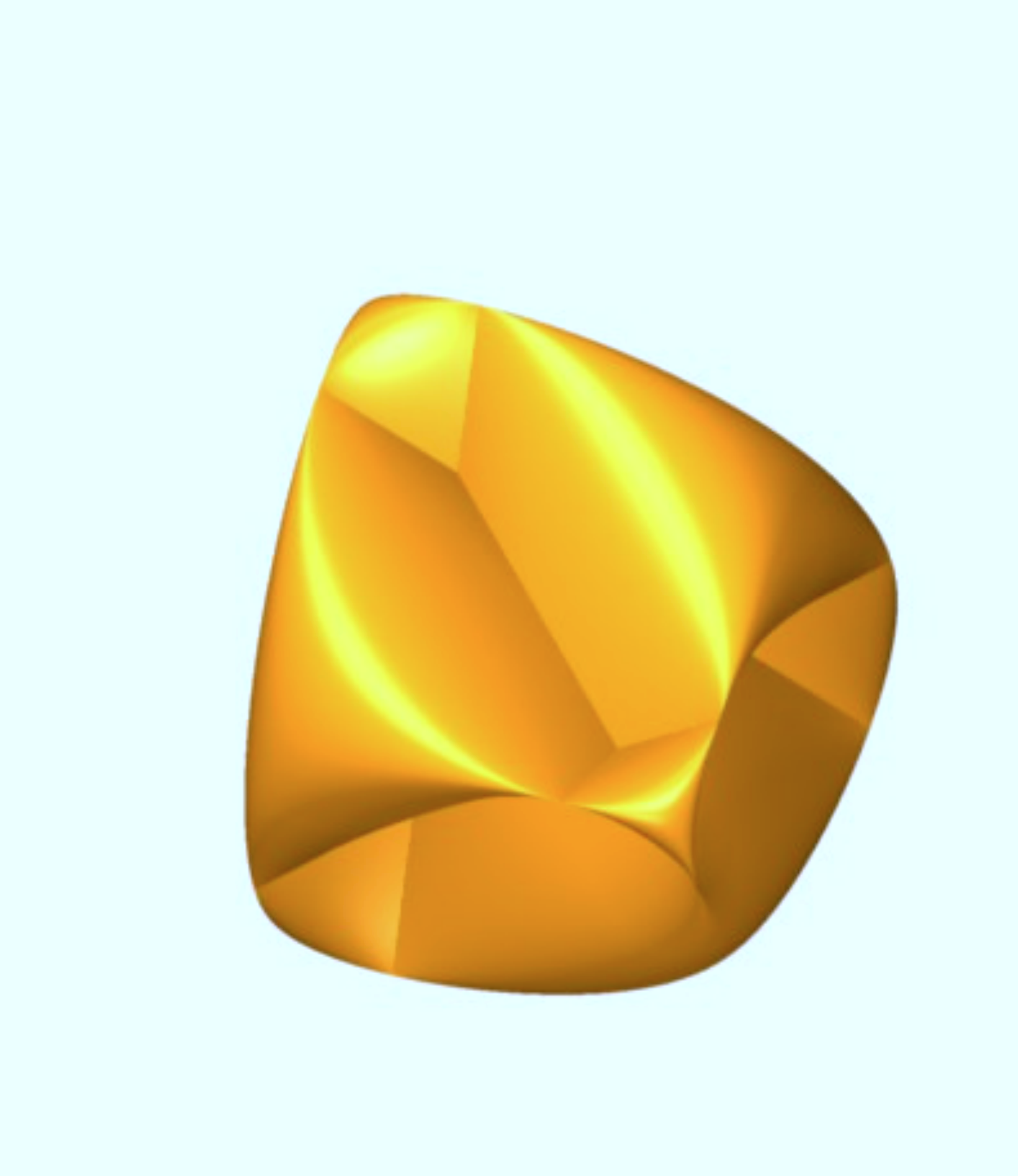} &
	\includegraphics[width=0.33\textwidth]{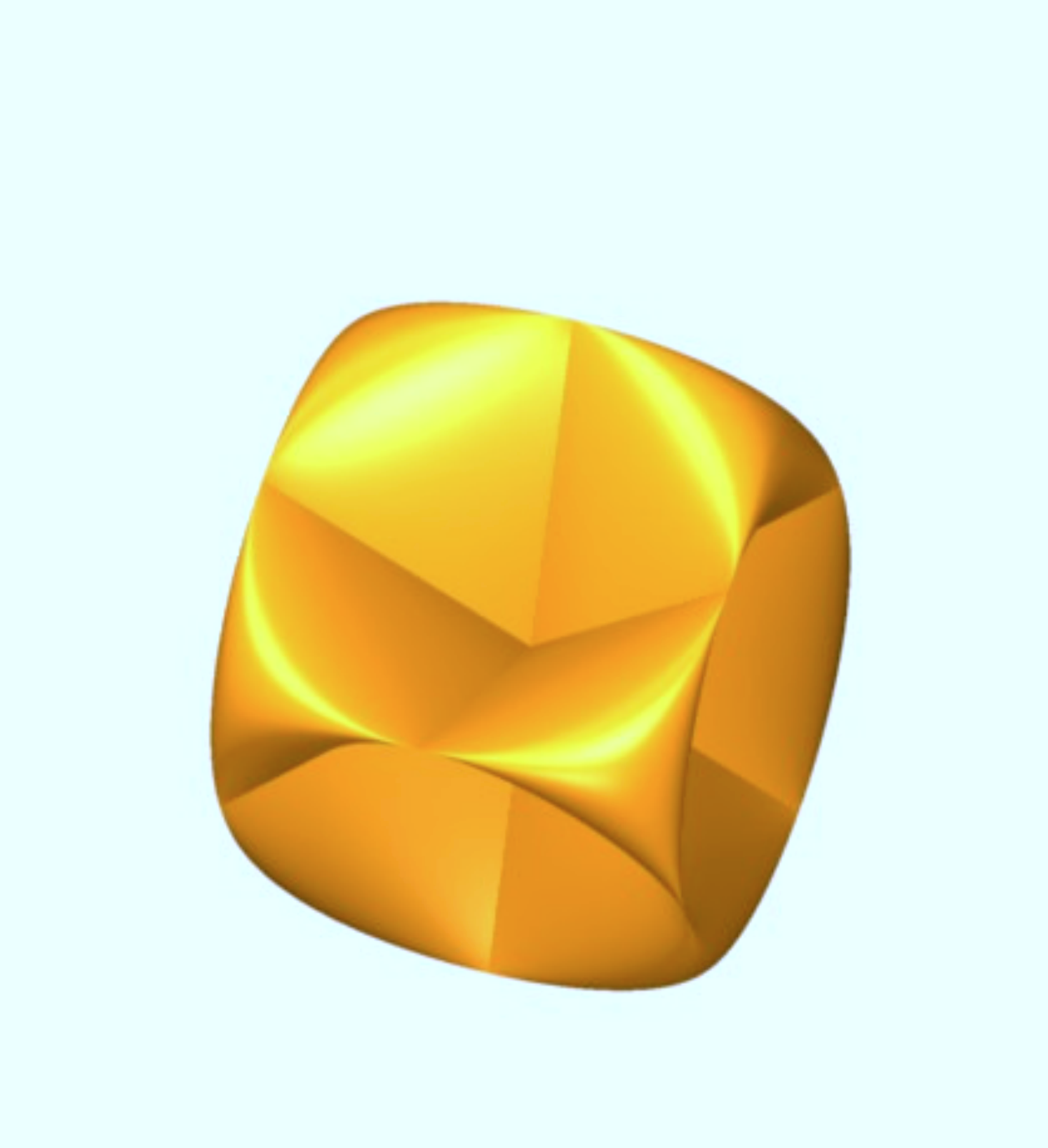} \\
	\end{array}$
	\caption{Level curves of the rank $2$ locus of $\E_{C_4}$.  The value of $x_4$ varies from $1$ to $0$ as we view the figures from left-to-right.}
	\label{fig: level curves}
	\end{figure}
An interesting observation is that the level curve with $x_4=1$ is the Cayley nodal cubic surface, the bounded region of which is precisely the elliptope $\E_{C_3}$.  We note that this holds more generally, i.e., the cut polytope of the $p$-cycle ${C_p}$ is the $p$-halfcube, and the facets of this polytope that lie in the hyperplane $\pm x_i=1$ are $(p-1)$-halfcubes. Thus, the elliptope $\E_G$ demonstrates the same recursive geometry exhibited by the polytope it relaxes.  

The eight linear terms define the rank $3$ locus as a hypersurface of degree eight.  
Since $\cutpm{G}\subset\E_G\subset[-1,1]^p$, the eight linear terms of the polynomial $p$ indicate that the facets of $\cutpm{G}$ supported by the hyperplanes $\pm x_i=1$ are also facets of $\E_G$.  
From this we can see that the eight hyperplanes $\pm x_i =1$ correspond to vertices in $\E_G^\circ$.
We can also see from this that only the simplicial facets of $\cutpm{G}$ have been relaxed in $\E_G$, and this relaxation is defined by the hypersurface $\{\Gamma_4^\prime=0\}$.  

Recall that we would like the relaxation of the facets to be smooth in the sense that all singular points on the boundary $\partial\E_G$ are also singular points on the boundary $\partial\cutpm{G}$. 
If this is the case, then we may translate the supporting hyperplanes of the relaxed facets to support regular extreme points of $\E_G$.  
The normal vectors to these translated hyperplanes will then form regular extreme points in the polar body $\E_G^\circ$.  
To see that this is indeed the case, we check that the intersection of the singular locus of $\{\Gamma_4^\prime=0\}$ with $\partial\E_G$ is restricted to the rank 3 locus of $\E_G$.  
With the help of Macaulay2, we compute that $\{\Gamma_4^\prime=0\}$ is singular along the six planes given by the vanishing of the ideals
$$
\begin{array}{ccc}
\langle x_3-x_4,x_1-x_2 \rangle,	&	 \langle x_3+x_4,x_1+x_2 \rangle,	 & 	\langle x_2-x_3,x_1-x_4 \rangle, \\
\langle x_2+x_3,x_1+x_4 \rangle,     &	 \langle x_2+x_4,x_1+x_3 \rangle, 	 & 	\langle x_2-x_4,x_1-x_3 \rangle,
\end{array}
$$
and at eight points
$$
\begin{array}{cccc}
\frac{1}{\sqrt{3}}\left(\mp1,\pm1,\pm1,\pm1\right), 	&	\frac{1}{\sqrt{3}} \left(\pm1,\mp1,\pm1,\pm1\right),	&
\frac{1}{\sqrt{3}}\left(\pm1,\pm1,\mp1,\pm1\right), 	&	 \frac{1}{\sqrt{3}}\left(\mp1,\mp1,\mp1,\pm1\right).
\end{array}
$$
The six planes intersect $\partial\E_G$ only along the edges of $\cutpm{G}$, and therefore do not introduce any new singular points that did not previously exist in $\cutpm{G}$.  
The eight singular points sit just outside the cut polytope $\cutpm{G}$ above the barycenter of each simplicial facet.  
However, these singular points lie in the interior of $\E_G$.  
This can be checked using the polyhedral description of $\E_G$ first studied by Barrett et al.~\cite{barrett}.  
The idea is that each point $(x_1,x_2,x_3,x_4)$ of the elliptope $\E_G$ arises from a point $(a_1,a_2,a_3,a_4)$ in the $(0,1)$-cut polytope, $\cut{G}$, by letting $x_i = cos(\pi a_i)$ for every $i\in[4]$.  
Since $\cut{G}$ is affinely equivalent to $\cutpm{G}$ under the linear transformation $y_i = 1-2x_i$, we apply the arccosine transformation of Barrett et al.~to the barycenter of each simplicial facet of $\cut{G}$ to produce the eight points on $\partial\E_G$:
$$
\begin{array}{cccc}
\frac{1}{\sqrt{2}}\left(\mp1,\pm1,\pm1,\pm1\right), 	&	\frac{1}{\sqrt{2}} \left(\pm1,\mp1,\pm1,\pm1\right),	&
\frac{1}{\sqrt{2}}\left(\pm1,\pm1,\mp1,\pm1\right), 	&	 \frac{1}{\sqrt{2}}\left(\mp1,\mp1,\mp1,\pm1\right).
\end{array}
$$
Thus, each of the eight singular points of $\Gamma_4^\prime$ lies in the interior of $\E_G$ on the line between the barycenter of a simplicial facet of $\cutpm{G}$ and one of these eight points in $\partial\E_G$.  
From this we see that the relaxation of the simplicial facets of $\cutpm{G}$ is smooth, and so we may translate the supporting hyperplanes $\langle v^F,x \rangle = 2$ away from $\cutpm{G}$ until they support some regular extreme point on $\partial\E_G$.  

In the polar $\E_G^\circ$ we check that the normal vectors to the hyperplanes $\pm x_i=1$ form vertices of rank~$1$, and the normal vectors corresponding to the translated versions of the hyperplanes $\langle v^F,x\rangle = 2$ are regular points on the rank $2$ strata of $\E_G^\circ$.  
The polar $\E_G^\circ$ is the spectrahedral shadow
$$
\E_G^\circ = \left\{
(x_1,x_2,x_3,x_4)\in\R^4  : \exists a,b,c\in\R :
Y = \begin{pmatrix}
a	&	x_1	&	0	&	x_4		\\
x_1	&	b	&	x_2	&	0		\\
0	&	x_2	&	c	&	x_3		\\
x_4	&	0	&	x_3	&	2-a-b-c	\\
\end{pmatrix}
\succeq0
 \right\},
$$
and the matrix $Y$ is a trace two matrix living in the cone $\symG$.   
The rank $3$ locus of $\E_G^\circ$ can be computed by forming the ideal generated by the determinant of $Y$ and its partials with respect to $a,b,$ and $c$, and then eliminating the variables $a,b,$ and $c$ from the saturation of this ideal with respect to the $3\times 3$ minors of $Y$.  
The result is a degree eight hypersurface that factors into eight linear forms:
$$
\begin{array}{cccc}
\!\!x_1-x_2-x_3+x_4+1,\!	&	\!-x_1+x_2-x_3+x_4+1,\!	&	\!-x_1-x_2+x_3+x_4+1,\!	&	\!x_1+x_2+x_3+x_4+1,\!\!	\\
\!\!x_1-x_2-x_3+x_4-1,\!	& \!-x_1+x_2-x_3+x_4-1,\! 	&	\!-x_1-x_2+x_3+x_4-1,\!	&	\!x_1+x_2+x_3+x_4-1.\!\!		
\end{array}
$$
The eight points in $\E_G^\circ$ that are dual to the hyperplanes $\pm x_i=1$ are vertices of the convex polytope whose $H$-representation is given by these linear forms.  
These vertices are projections of rank $1$ matrices in $\symG$.  
Our remaining eight hyperplanes supporting regular extreme points in $\E_G$ should correspond to rank $2$ regular extreme points in $\E_G^\circ$.
We check that the normal vectors to these hyperplanes don't lie on the singular locus of the rank $2$ strata of $\E_G^\circ$.    
To compute the rank 2 strata of $\E_G^\circ$ we eliminate the variables $a,b,$ and $c$ from the ideal generated by the $3\times 3$ minors of $Y$ and all of their partial derivatives with respect to the variables $a,b,$ and $c$.  
The result is a degree four hypersurface defined by the polynomial
$$
x_1^2x_2^2x_3^2-x_1^3x_2x_3x_4-x_1x_2^3x_3x_4-x_1x_2x_3^3x
     _4+x_1^2x_2^2x_4^2+x_1^2x_3^2x_4^2+x_2^2x_3^2x_4^2-x_1x_2x_3x_4^3+
     x_1x_2x_3x_4
$$
that is singular along six planes defined by the vanishing of the ideals
$$
\begin{array}{cccccc}
	\langle u, y\rangle,	&	\langle u, z\rangle, 	&	\langle x, u \rangle,	&
	\langle z, y\rangle,	&	\langle x,z\rangle,	&	\langle x,y\rangle.	
\end{array}
$$
To visualize the rank $2$ locus of $\E_G^\circ$ we intersect this degree four hypersurface with the hyperplane $x_1+x_2+x_3+x_4=b$ and let $b$ vary from $0$ to $1$.  
A sample of these level curves is presented in Figure~\ref{fig: level curves of dual}.
	\begin{figure}
	\centering
	$\begin{array}{c c c}
	\includegraphics[width=0.28\textwidth]{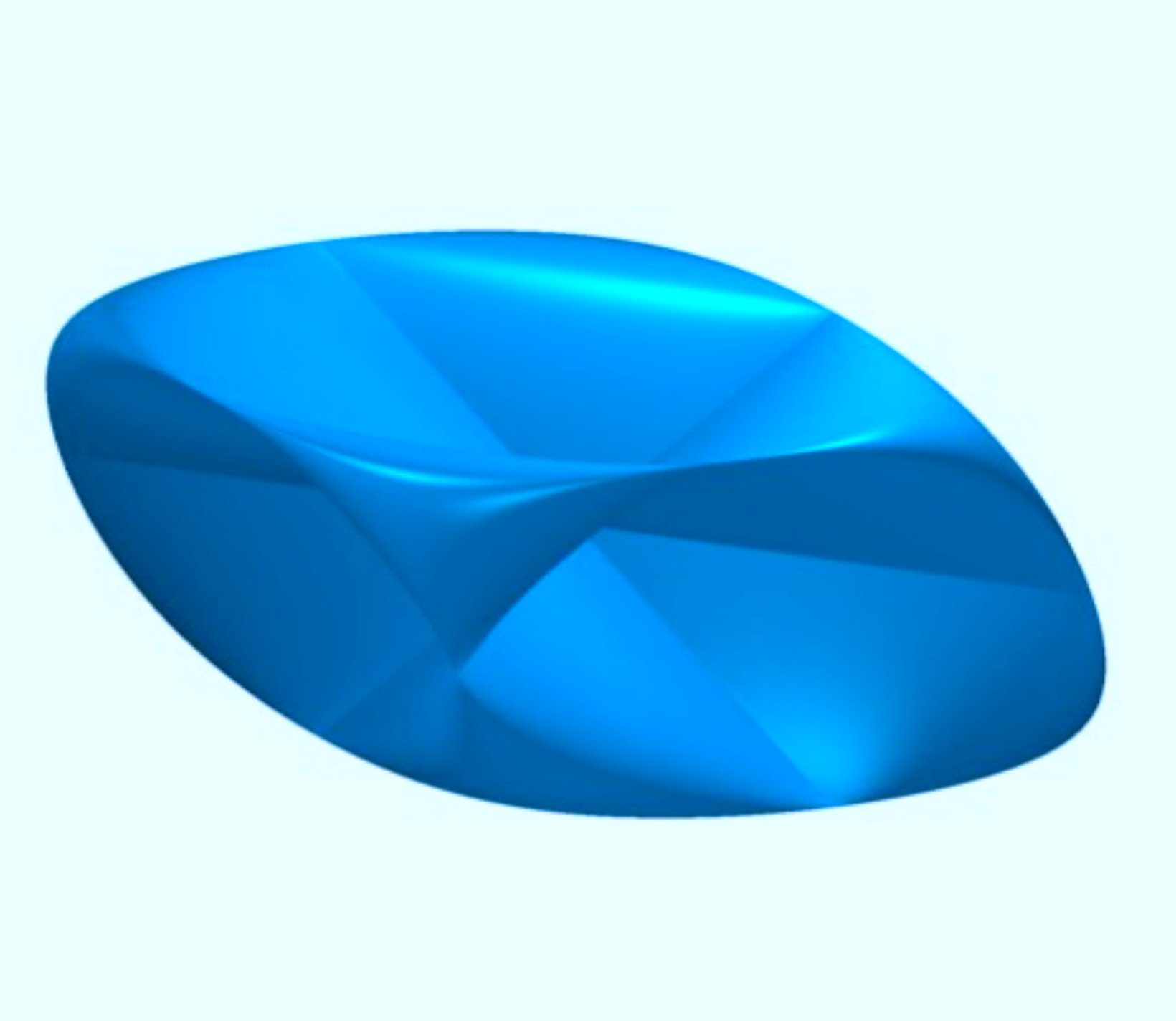}\quad &
\quad	\includegraphics[width=0.28\textwidth]{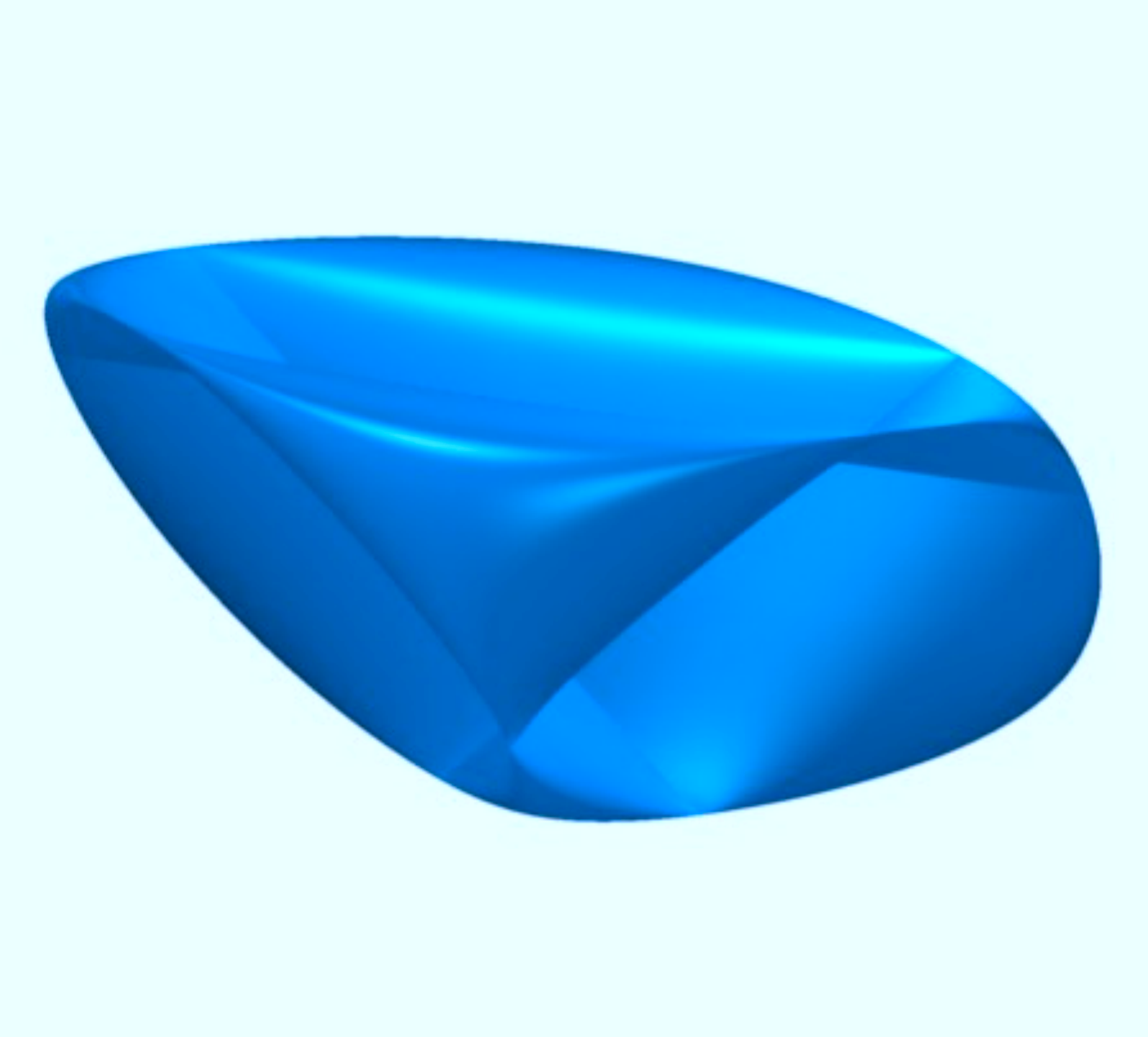}\qquad & \qquad
	\includegraphics[width=0.28\textwidth]{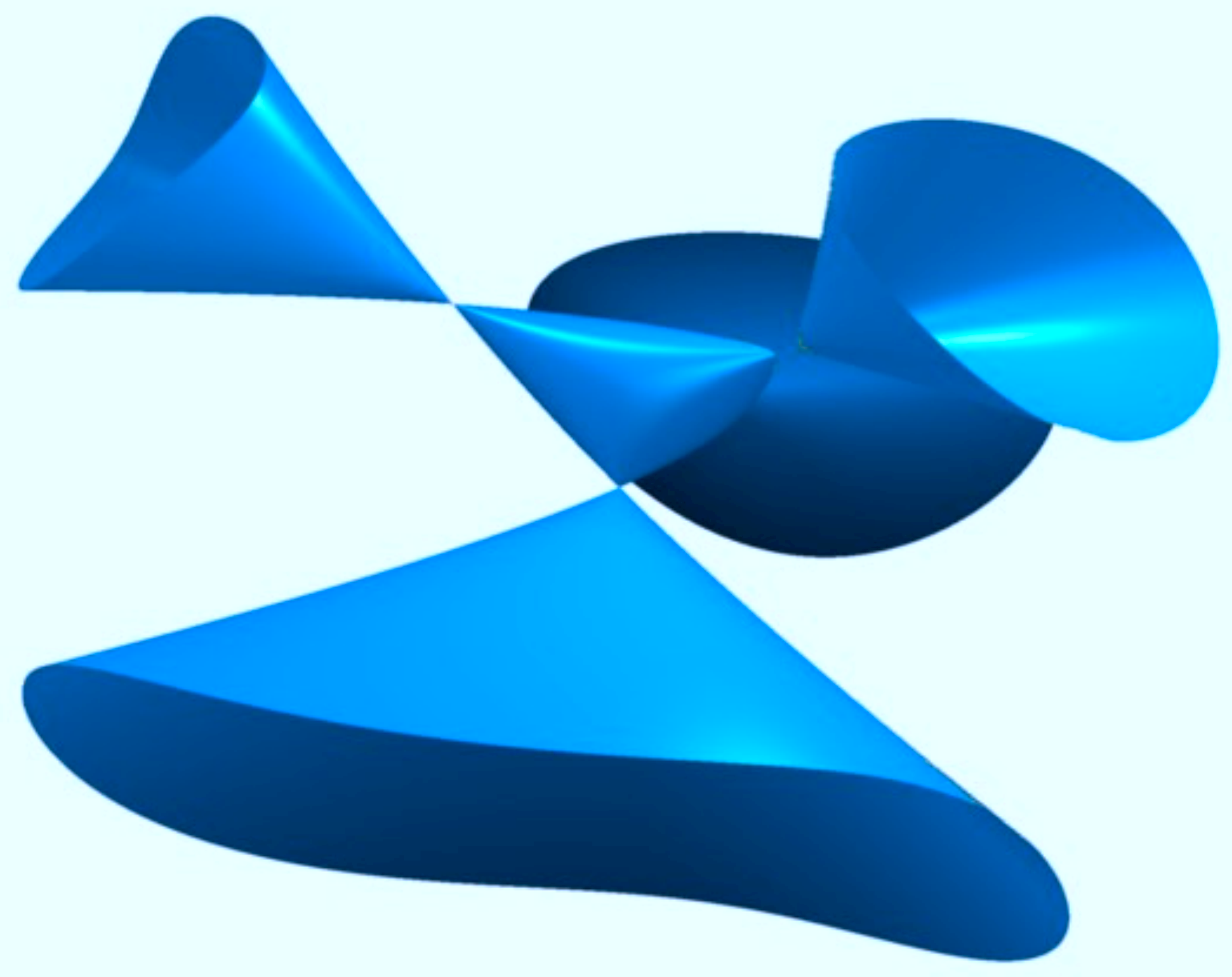} \\
	\end{array}$
	\caption{Level curves of the rank $2$ locus of $\E_{C_4}^\circ$ defined by the hyperplane $x_1+x_2+x_3+x_4=b$.  The value of $b$ varies from $1$ to $0$  from left-to-right.}
	\label{fig: level curves of dual}
	\end{figure}
Since the normal vectors to our hyperplanes are nonzero in all coordinates, their corresponding points are regular points in the rank $2$ locus 
of $\E_G^\circ$, and therefore arise as projections of extremal matrices of rank $2$ in $\symG$.  
The combinatorial work in Section~\ref{facet-ray identification for graphs with no K_5 minors} supports this geometry.


\subsection{The polar of an elliptope}
\label{the dual of an elliptope}
Recall that the polar of a subset $K\subset \R^d$ is 
\begin{equation}
\label{def:polar}
K^\circ = 
\{
y\in\R^d : \langle x,y\rangle\leq 1 \mbox{ for all $x\in K$}
\}.
\end{equation}
In this subsection we prove Theorem~\ref{dual body theorem} via an application of spectrahedral polarity.  
We first review how to compute the polar for a spectrahedron via the methods of Ramana and Goldman described in \cite{ramana}.

Let $C, A_1,\dots , A_d\in \s^p$, where $A_1,\dots , A_d$ are linearly independent. 
A \emph{spectrahedron} is a closed convex set $\mathcal{S}$ of the form
$$
\mathcal{S} = 
\left\{
x\in\R^d :  A(x)=C+\sum_{i=1}^dx_iA_i\succeq 0
\right\},
$$
where $A(x)\succeq0$ indicates that $A(x)$ is positive semidefinite.  
Since the matrices $A_1,\ldots,A_d$ are linearly independent then $\mathcal{S}$ is affinely equivalent to the section of the positive semidefinite cone
$$
\mathcal{A}\cap\s^p_{\succeq0},
$$
where
$\mathcal{A}=C+\textrm{span}_\R(A_1,\dots , A_d)$.  
Thus, the affine section $\mathcal{A}\cap\s^p_{\succeq0}$ is often also called a spectrahedron.
Let $\mathcal{W}=\textrm{span}_\R(A_1,\dots , A_d)$ be the linear subspace defined by $A_1,\dots , A_d$ and 
$$ \pi_\mathcal{W} \,: \,\, \s^p \rightarrow \s^p/{\mathcal W}^\perp \,\simeq \, \mathbb{R}^d, \qquad X\mapsto (\langle X,A_1\rangle, \dots , \langle X,A_d\rangle)$$
be the canonical projection. 
We define the ${p+1\choose 2}$-dimensional spectrahedron
$$
\mathcal{R}=\{X\in \s^p_{\succeq 0} : \langle X,C\rangle\leq 1\}.
$$
Then the polar of the spectrahedron $\mathcal{S}$ is a \emph{spectrahedral shadow}, namely the closure of the image of the spectrahedron $\mathcal{R}$ under the projection $\pi_\mathcal{W}$, i.e.
$\mathcal{S}^\circ=\textrm{cl}(\pi_\mathcal{W}(\mathcal{R}))$  \cite{ramana}.


\subsubsection*{Proof of Theorem~\ref{dual body theorem}}

We first apply the general theory about spectrahedra to compute the polar of the set of correlation matrices 
$$
\mathcal{E}_p=\{ X\in \s^p_{\succeq 0} :  X_{ii}=1 \textrm{ for all } i\in[p] \}.
$$
Let $A_{ij}=[a_{ij}]\in \s^p$, $1\leq i<j\leq p$ be the zero matrix except for $a_{ij}=a_{ji}=1$. 
Then $\mathcal{E}_p$ is a spectrahedron
$$
\mathcal{E}_p= \s^p_{\succeq 0}\cap \mathcal{A},
$$
 where $\mathcal{A}$ is the affine subspace
$$
\mathcal{A}= \textrm{I}_p+\textrm{span}_\R( A_{ij} : 1\leq i<j\leq p).
$$
Notice that since $C = \textrm{I}_p$, then $\mathcal{R}=\{X\in \s^p_{\succeq 0} : \langle X,C\rangle= 1\}$.  
Applying the above techniques we get that the polar of $\mathcal{E}_p$ is the spectrahedral shadow
$$
\mathcal{E}_p^\circ = \{ Y\in\mathbb{R}^{\binom{p}{2}} : \exists \,X\in  \s^p_{\succeq 0} \textrm{ such that } X_{ij}=Y_{ij} \textrm{ for all } i< j\,\textrm{ and } \, \textrm{tr}(X)= 2\}.
$$
We now compute the polar of the elliptope 
$$
\mathcal{E}_G=\{y \in \R^E : \exists \, Y\in \mathcal{E}_p \textrm{ such that } Y_E=y \}.
$$
Let $\mathcal{L}$ be a linear subspace of $\mathbb{R}^{\binom{p}{2}}$ defined by $A_{ij}$, $ij\in E$. 
We denote by $\mathcal{L}^\perp$ the orthogonal complement of $\mathcal{L}$ in $\mathbb{R}^{\binom{p}{2}}$. 
Then 
$$
 ((\mathcal{E}_p\,+\, \mathcal{L}^\perp)/\mathcal{L}^\perp)^\circ  \,\,\, = \,\,\,  \mathcal{E}_p^\circ \,\cap \, \mathcal{L}, 
$$
which means that
$$
\E_G^\circ = \{ x\in\mathbb{R}^E : \exists \,X\in \symG \textrm{ such that } X_E=x \,\textrm{ and } \, \textrm{tr}(X)= 2\}.
$$
This completes the proof of Theorem~\ref{dual body theorem}. \hfill$\square$

It is clear that the constraint \,$\textrm{tr}(X)= 2$\, is just a scaling. 
So the extreme points of the convex body $\E_G^\circ$ correspond to the extremal rays of $\symG$.
Since an extreme point of $\E_G^\circ$ either has a full-dimensional normal cone or is a regular point of $\E_G^\circ$ we arrive at the following corollary.  

\begin{cor}
\label{hyperplanes in the elliptope}
The hyperplanes supporting facets of the elliptope $\mathcal{E}_G$ or regular extreme points of $\E_G$ correspond to extremal rays of the cone $\,\symG$.
\end{cor}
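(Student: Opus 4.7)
The plan is to combine Theorem~\ref{dual body theorem} with the standard polar duality between boundary points of $\E_G$ and extreme points of $\E_G^\circ$. The argument splits naturally into two halves: first, identify the polar points dual to the listed supporting hyperplanes as extreme points of $\E_G^\circ$; second, lift each such extreme point to an extremal matrix in $\symG$ by passing to the trace-$2$ slice of the cone.

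For the duality step, suppose $H = \{y \in \R^E : \langle \alpha, y\rangle = c\}$ is a supporting hyperplane of $\E_G$ with $c = \max_{y \in \E_G}\langle \alpha, y\rangle > 0$. Then $\alpha/c$ lies on $\partial \E_G^\circ$. If $H$ supports a facet of $\E_G$, the outward normal $\alpha$ is the unique supporting direction at points in the relative interior of that facet, so $\alpha/c$ has a full-dimensional normal cone in $\E_G^\circ$ and is therefore a vertex. If $H$ supports $\E_G$ at a regular extreme point, the supporting hyperplane at a regular boundary point is unique up to positive scaling, and $\alpha/c$ is again an extreme point of $\E_G^\circ$ (with a lower-dimensional normal cone). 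In either case $\alpha/c$ is an extreme point of $\E_G^\circ$.

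For the lifting step, I would consider the compact convex slice $S_2 = \{X \in \symG : \tr(X) = 2\}$ together with the affine projection $\pi : X \mapsto X_E$. By Theorem~\ref{dual body theorem}, $\pi(S_2) = \E_G^\circ$. Given an extreme point $x \in \E_G^\circ$, the fiber $F = \pi^{-1}(x) \cap S_2$ is nonempty and is a face of $S_2$: if $X \in F$ satisfies $X = \tfrac{1}{2}(X_1 + X_2)$ with $X_1, X_2 \in S_2$, then $x = \tfrac{1}{2}(\pi(X_1) + \pi(X_2))$, which forces $\pi(X_1) = \pi(X_2) = x$ by extremality of $x$. By Krein--Milman, $F$ has an extreme point $X^*$, which is automatically an extreme point of $S_2$. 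Since $\tr$ is strictly positive on $\symG \setminus \{0\}$, the extreme points of $S_2$ correspond bijectively to the extreme rays of $\symG$ (a nontrivial decomposition $X^* = Y_1 + Y_2$ in $\symG$ would renormalize to a nontrivial convex decomposition inside $S_2$). Hence $X^*$ spans an extremal ray of $\symG$, and $H$ corresponds to this extremal ray.

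The main subtlety I expect is that $\pi$ is generally not injective on $S_2$: two trace-$2$ matrices in $\symG$ may share the same off-diagonal entries while differing on the diagonal. So the association between supporting hyperplanes and extremal rays is genuinely one-to-many rather than a bijection, and a single hyperplane can lift to a face of $S_2$ containing more than one extreme ray. The face-plus-Krein--Milman argument above sidesteps this by guaranteeing at least one extreme ray per supporting hyperplane, which is all the corollary claims.
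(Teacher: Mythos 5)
Your proof is correct and follows essentially the same route as the paper: polar duality identifies the given supporting hyperplanes with extreme points of $\E_G^\circ$, and Theorem~\ref{dual body theorem} realizes $\E_G^\circ$ as the projection of the trace-$2$ slice of $\symG$, whose extreme points span the extremal rays. The paper dispatches this in two sentences (``the constraint $\tr(X)=2$ is just a scaling, so the extreme points of $\E_G^\circ$ correspond to the extremal rays of $\symG$''), so your fiber-is-a-face plus Krein--Milman argument supplies exactly the detail the paper omits, in particular the correct handling of the non-injective projection onto the off-diagonal entries.
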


A supporting hyperplane of $\E_G$ of the type described in Corollary~\ref{hyperplanes in the elliptope} identifies an extremal ray of rank $r$ if it corresponds to a point in the rank $r$ strata of $\E_G^\circ$.  
This is the basis for the facet-ray identification property.


\section{Facet-Ray Identification for graphs without $K_5$ minors}
\label{facet-ray identification for graphs with no K_5 minors}

In this section, we show that all graphs without $K_5$ minors have the facet-ray identification property.  
We first demonstrate that the $p$-cycle $C_p$ has the facet-ray identification property, and then generalize this result to all graphs without $K_5$ minors.


\subsection{Facet-Ray Identification for the Cycle.}
\label{facet-ray identification for the cycle}

Let $G:= C_p$ for $p\geq 3$.  
Here, we will make the identification $\R^E\simeq\R^p$ by identifying the coordinate $e=\{i,i+1\}$ in $\R^E$ with the coordinate $i$ in $\R^p$. 
For an edge $e\in E$ we define two $p\times p$ matrices, $X_e$ and $X_e^-$, where
\begin{center}
$\begin{array}{lcr}
	$$
	(X_e)_{s,t}:=
	\begin{cases}
	 1 & \mbox{ if $s,t\in e$}, \\
	 0 & \mbox{ otherwise}, 
	\end{cases}
	$$
& 
	\quad\,\mbox{and} \quad\quad
&
	$$
	(X_e^-)_{s,t}:=
	\begin{cases}
	 1 & \mbox{ if $s=t$ and $s,t\in e$}, \\ 
	-1 & \mbox{ if $s\neq t$ and $s,t\in e$}, \\
	 0 & \mbox{ otherwise}. 
	 \end{cases}
	 $$
\end{array}$
\end{center}

	\begin{prop}
	\label{halfcubical matrices}
	The matrices $X_e$ and $X_e^-$ are extremal in $\symG$ of rank $1$.  
	Moreover, the off-diagonal entries of $X_e$ and $X_e^-$ are given by the normal vector to the hyperplane $\langle \pm v^e, x \rangle=1$, respectively.  
	\end{prop}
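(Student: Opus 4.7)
My plan is to prove the two claims (extremality with rank one, and the identification of off-diagonal entries with the facet normals) essentially by direct inspection, since the matrices $X_e$ and $X_e^-$ are almost as explicit as possible.

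First I would write down an explicit rank-one Gram factorization. Let $e=\{i,j\}$ and let $u\in\R^p$ be the indicator vector of $e$, i.e.\ $u_s=1$ if $s\in e$ and $u_s=0$ otherwise; similarly let $w\in\R^p$ be defined by $w_i=1$, $w_j=-1$, and $w_s=0$ otherwise. A routine check shows $X_e=uu^T$ and $X_e^-=ww^T$, so both matrices are positive semidefinite of rank exactly one. Because all nonzero entries of $X_e$ and $X_e^-$ sit in the block indexed by the two endpoints of $e$, and $e\in E(G)$, every off-diagonal entry indexed by a nonedge vanishes, so both matrices lie in $\symG$.

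For extremality I would apply Theorem~\ref{agler thm} with $k=1$. A one-dimensional Gram representation of $X_e$ is the scalar sequence $(u_1,\dots,u_p)$ defined above, and for any nonedge $\{s,t\}\in\nonedge$ at least one of $s,t$ is not in $e$, so $u_s u_t=0$. Hence $U_{\nonedge}=\{0\}$ has rank $0=\binom{k+1}{2}-1$, and Theorem~\ref{agler thm} yields that $X_e$ is extremal in $\symG$. The same argument with $(w_1,\dots,w_p)$ in place of $(u_1,\dots,u_p)$ handles $X_e^-$. (Alternatively, one can observe that any rank-one element of $\psdcone$ is extremal in $\psdcone$, since a sum decomposition would force both summands to be nonnegative scalar multiples of the same $vv^T$, and the resulting summands automatically preserve the zero pattern, so the same extremality passes down to the subcone $\symG$.)

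Finally, the identification of off-diagonal entries with facet normals is immediate: the vector of off-diagonal entries of $X_e$ restricted to the edge coordinates is precisely the standard basis vector $v^e\in\R^E$, i.e.\ the normal to the hyperplane $\langle v^e,x\rangle=1$; similarly the vector of off-diagonal entries of $X_e^-$ equals $-v^e$, the normal to $\langle -v^e,x\rangle=1$. I do not anticipate any real obstacle here — the main technical input is just Theorem~\ref{agler thm}, used in the trivial case $k=1$.
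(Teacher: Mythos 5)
Your proof is correct and follows essentially the same route as the paper: the same rank-one Gram vectors $u$ and $w$, the observation that $U_{\nonedge}=\{0\}$ so that $\rank(U_{\nonedge})=0=\binom{2}{2}-1$, and an appeal to Theorem~\ref{agler thm}. Your parenthetical alternative (rank-one matrices are extremal in $\psdcone$, and extremality passes to the subcone $\symG$) is a valid and slightly more elementary shortcut, but the main argument matches the paper's.
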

	
\begin{proof}
These matrices are of rank $1$ and have respective $1$-dimensional Gram representations $(u_1^e,\ldots, u_p^e)$ and $(w_1^e,\ldots, w_p^e)$, where
\begin{center}
$\begin{array}{lcr}
	$$u^e_{t}:=\begin{cases} 1 & \mbox{ if $t\in\{i,j\}$}, \\ 0 & \mbox{ otherwise}, \end{cases}$$ & \mbox{and} &
	$$w^e_{t}:=\begin{cases} 1 & \mbox{ if $t=i$}, \\ 
		-1 & \mbox{ if $t=j$}, \\ 0 & \mbox{ otherwise}. \end{cases}$$
\end{array}$
\end{center}
Consider the collection $U_{\nonedge}$ with respect to these Gram representations.  
If $\{s,t\}\in\nonedge$ then either $s\notin\{i,j\}$ or $t\notin\{i,j\}$ (or both).  
Thus, $u_s={\bf \overline{0}}$ ($w_s={\bf \overline{0}}$) or $u_t={\bf \overline{0}}$ ($w_t={\bf \overline{0}}$) (or both).  
Hence, $U_{\nonedge}=\{{\bf \overline{0}}\}$ and $\rank\left(U_{\nonedge}\right)=0$.  
So by Theorem \ref{agler thm} the matrices $X_e$ and $X_e^-$ are extremal in $\symG$.  
Since for all $e\not=e^\prime\in E$ the matrices $X_e$, $X_{e^\prime}$, $X_e^-$ and $X_{e^\prime}^-$ are not scalar multiples of each other, each such matrix lies on a different extremal ray of $\symG$.  
\end{proof}

Our next goal is to identify rank $p-2$ extremal matrices in $\symG$ whose off-diagonal entries are determined by the normal vectors to the facet-supporting hyperplanes $\langle v^F, x\rangle = p-2$.
Thus, we wish to prove the following theorem.

\begin{thm}
\label{simplicial facets correspondence thm}
Let $F\subset[p]$ be a subset of odd cardinality.  
Then there exists a rank $p-2$ extremal matrix $\Delta_{p,F}\in\symG$ such that $(\Delta_{p,F})_{i,i+1}=-v^F_i$ for all $i\in[p]$ (modulo $p$).  
\end{thm}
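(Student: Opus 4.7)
My plan is to construct $\Delta_{p,F}$ explicitly as a rescaled identity plus a signed cycle adjacency matrix, and then verify extremality via Agler's criterion (Theorem~\ref{agler thm}). The first observation that simplifies everything is that conjugation $\Delta \mapsto D\Delta D$ by a diagonal sign matrix $D = \mathrm{diag}(\epsilon_1,\dots,\epsilon_p) \in \{\pm1\}^{p\times p}$ preserves $\symG$, the rank, and extremality, and multiplies each off-diagonal $\Delta_{i,i+1}$ by $\epsilon_i\epsilon_{i+1}$. The cyclic product $\prod_{i=1}^p \Delta_{i,i+1}$ is invariant under this switching action and, for our target, equals $\prod_i(-v_i^F) = (-1)^{p+|F|} = (-1)^{p+1}$, since $|F|$ is odd. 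Hence it suffices to construct one representative in each of the two switching classes: (a) $p$ odd with cyclic product $+1$ (take all off-diagonals $=+1$), and (b) $p$ even with cyclic product $-1$ (take one off-diagonal $=-1$, the rest $=+1$).

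In both cases I would set
$$\Delta_{p,F} \;:=\; 2\cos(\pi/p)\,I_p \;+\; B_F,$$
where $B_F$ is the signed adjacency matrix of $C_p$ with $(B_F)_{i,i+1} = -v_i^F$ on edges and zero elsewhere. By construction the off-diagonal entries of $\Delta_{p,F}$ match the prescribed pattern. In case (a) the matrix $B_F$ is the circulant adjacency matrix of the undirected $p$-cycle, with eigenvalues $\{2\cos(2\pi k/p)\}_{k=0}^{p-1}$. In case (b) a plane-wave ansatz with antiperiodic boundary condition $x_{p+i} = -x_i$ (coming from the single negative edge) yields eigenvalues $\{2\cos((2k+1)\pi/p)\}_{k=0}^{p-1}$. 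In both cases the minimal eigenvalue equals $-2\cos(\pi/p)$ with multiplicity exactly two, so $\Delta_{p,F}$ is positive semidefinite of rank precisely $p-2$.

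To verify extremality I would take a Gram representation $(u_1,\dots,u_p) \subset \R^{p-2}$ whose $k$-th vector has coordinates given by the $k$-th components of the nonzero-eigenvalue eigenvectors of $\Delta_{p,F}$, each scaled by the square root of the corresponding eigenvalue. A direct count gives $|\nonedge| = \binom{p}{2} - p = \binom{p-1}{2} - 1$, exactly the dimension of the trace-zero subspace of $\s^{p-2}$. By Theorem~\ref{agler thm}, extremality is then equivalent to linear independence of the $|\nonedge|$ symmetric matrices $\{u_iu_j^T+u_ju_i^T : \{i,j\}\in\nonedge\}$ in this trace-zero subspace.

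The principal obstacle is this final independence check: because the counts are tight, no spurious linear relation among the off-edge outer products may occur. I would attack it by expanding each $u_iu_j^T+u_ju_i^T$ in the orthonormal basis of $\s^{p-2}$ induced by the eigendecomposition of $\Delta_{p,F}$ (equivalently, the discrete Fourier transform on $\Z/p\Z$, or its antiperiodic analogue in case (b)). In this basis the coefficients of the outer products become trigonometric polynomials in the differences $i-j \pmod{p}$, and the required nonsingularity reduces to the non-vanishing of a block of Vandermonde-type determinants in the $p$-th (respectively $2p$-th) roots of unity. As a sanity check I would first verify $p=4$, which is worked out explicitly in Section~\ref{four cycle example}, and $p=5$ by direct computation before tackling the general trigonometric identity.
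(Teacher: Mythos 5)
Your overall architecture parallels the paper's: reduce to one representative sign pattern for each parity of $p$ via switching (the paper does exactly this in Lemma~\ref{lower cardinality T case}, negating the Gram vectors indexed by a cut $U$ whose cutset $\delta(U)$ is the complementary edge set, which is your diagonal conjugation $\Delta\mapsto D\Delta D$ in disguise), then exhibit an explicit rank-$(p-2)$ matrix with the prescribed off-diagonal signs and verify extremality through Theorem~\ref{agler thm} combined with the tight count $|\nonedge|=\binom{p}{2}-p=\binom{p-1}{2}-1$. Your switching-invariant bookkeeping via the cyclic product $(-1)^{p+|F|}=(-1)^{p+1}$ is correct, and your candidate $2\cos(\pi/p)I_p+B_F$, while different from the paper's matrix (which has non-constant diagonal and Gram vectors of support at most two), is genuinely positive semidefinite of rank $p-2$: the minimal eigenvalue $-2\cos(\pi/p)$ of the periodic (resp.\ antiperiodic) signed cycle does have multiplicity exactly two in both parities.

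The genuine gap is the extremality check, which you yourself flag as ``the principal obstacle'' and then only outline. Proving that the $\binom{p-1}{2}-1$ matrices $u_iu_j^T+u_ju_i^T$, $\{i,j\}\in\nonedge$, are linearly independent is the entire content of the paper's Lemma~\ref{maximal cardinality T case} and consumes most of its proof; the paper's Gram vectors $u_j=e_{j-i-2}+e_{j-i-1}$ are chosen with small support precisely so that these outer products have transparent sparsity patterns and the independence can be read off combinatorially. Your plan to expand in the Fourier (or antiperiodic Fourier) eigenbasis and reduce to ``non-vanishing of a block of Vandermonde-type determinants'' is plausible but is not executed: you neither write down the determinant nor show it is nonzero, and because the dimension count is exactly tight there is no slack --- a single undetected relation among the off-edge outer products would destroy extremality. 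Verifying $p=4$ and $p=5$ by hand does not substitute for the general identity. As written, the proposal establishes that your matrix lies in $\symG$, has the required off-diagonal entries, and has rank $p-2$, but not that it is extremal; the theorem is therefore not proved.
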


To prove Theorem \ref{simplicial facets correspondence thm} we first construct the matrices $\Delta_{p,F}$ when $F$ is a maximal odd cardinality subset of $[p]$, and then prove a lemma showing the existence of such matrices in all the remaining cases.  
Let $F$ be a maximal odd cardinality subset of $[p]$, and let $e_1,e_2,\ldots,e_{p-2}$ denote the standard basis vectors for $\R^{p-2}$.  
Notice that for $p$ even, $F=[p]\backslash\{i\}$ for some $i\in[p]$, and for $p$ odd, $F=[p]$.  
For $i\in[p]$ we define the collection of vectors

\begin{center}
$
\begin{array}{l}
u_i:=e_{p-2}, \\
u_{i+1}:=\sum_{j=1}^{p-2}(-1)^{j+1}e_j, \\
u_{i+2}:=e_1, \\
u_{j}:= e_{j-i-2}+e_{j-i-1},
\;\mbox{ for $\,i+3\leq j\leq i+p-1$}.\\
\end{array}
$
\end{center}


Here, we view the indices of these vectors modulo $p$, i.e. $p+1=1$.
For $p$ even and $F=[p]\backslash\{i\}$, let $\Delta_{p,F}$ denote the positive semidefinite matrix with Gram representation $(u_1,u_2,\ldots,u_p)$.  
Similarly, for $p$ odd and $F=[p]$, let $i=1$, and let $\Delta_{p,F}$ denote the matrix with Gram representation $(u_1,u_2,\ldots,u_p)$.

\begin{rmk}
\label{gram representation remark}
While independently discovered by the authors in terms of facets of $\cutpm{G}$, the Gram representation $(u_1,\ldots,u_{p})$ for $i=p-1$  was previously used in \cite[Lemma 6.3]{agler} to demonstrate that the sparsity order of the $p$-cycle is larger than $1$ for $p\geq4$.  
Here, we verify that this representation is indeed extremal, and show that it arises as part of a collection of extremal representations given by the facets of the cut polytope $\cutpm{G}$.  
\end{rmk}

\begin{lem}
\label{maximal cardinality T case}
Let $F$ be a maximal odd cardinality subset of $[p]$.  
Then the matrix $\Delta_{p,F}$ is extremal in $\symG$ with rank $p-2$.
\end{lem}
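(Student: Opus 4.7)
The plan is to establish three things about $\Delta_{p,F}$: (i) it lies in $\symG$ with the off-diagonal values prescribed in Theorem~\ref{simplicial facets correspondence thm}, (ii) its rank is $p-2$, and (iii) it is extremal, via Theorem~\ref{agler thm}. Claims (i) and (ii) are direct computations with the Gram vectors. For (i) one checks $u_s^T u_t$ in each of the four configurations of the Gram representation---pairs involving $u_i=e_{p-2}$, pairs involving the alternating vector $u_{i+1}=\sum_{j=1}^{p-2}(-1)^{j+1}e_j$, pairs involving $u_{i+2}=e_1$, and generic pairs $u_{i+k},u_{i+k'}$ with $3\le k<k'\le p-1$---and verifies that nonedges give inner product $0$ (either by disjointness of supports or by the cancellation $(-1)^{k-1}+(-1)^{k}=0$), while edges give $+1$ except at the wraparound $\{i,i+1\}$, where the value is $(-1)^{p-1}$. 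A case check on the parity of $p$ then matches each edge value with $-v^F$. For (ii) the Gram vectors already include $e_1=u_{i+2}$ and $e_k+e_{k+1}=u_{i+k+2}$ for $k=1,\ldots,p-3$, which telescope to give every standard basis vector of $\R^{p-2}$.

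The main work is (iii). With Gram dimension $k=p-2$, Theorem~\ref{agler thm} requires $\rank(U_{\nonedge})=\binom{p-1}{2}-1$. By (i) each generator $u_su_t^T+u_tu_s^T$ has trace $2u_s^T u_t=0$, so $\spn(U_{\nonedge})$ sits inside the trace-zero subspace of $\s^{p-2}$, which has dimension exactly $\binom{p-1}{2}-1$. Equality is therefore equivalent to the dual statement that $\spn(U_{\nonedge})^{\perp}=\spn(I_{p-2})$ inside $\s^{p-2}$: any symmetric matrix $M\in\s^{p-2}$ satisfying $u_s^T M u_t=0$ for every nonedge $\{s,t\}$ of $C_p$ must be a scalar multiple of $I_{p-2}$.

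I would prove this dual statement by a row-by-row peeling argument. The nonedge constraints split into four families: (a) pairs $(i,i+k)$ yield $M_{p-2,1}=0$ together with $M_{p-2,\ell}+M_{p-2,\ell+1}=0$ for $1\le\ell\le p-4$, forcing row and column $p-2$ of $M$ to vanish off the diagonal; (b) pairs $(i+2,i+k)$ yield $M_{1,\ell}+M_{1,\ell+1}=0$ for $2\le\ell\le p-3$, which combined with $M_{1,p-2}=0$ from (a) and the symmetry of $M$ force row and column $1$ to vanish off the diagonal; (c) the generic pairs give the $2\times 2$-block equations $M_{a,b}+M_{a,b+1}+M_{a+1,b}+M_{a+1,b+1}=0$ with $b\ge a+2$, which after (a) and (b) let an induction on the row index peel off rows $2,3,\ldots,p-4$ in turn (row $p-3$ then follows by symmetry); and (d) pairs $(i+1,i+k)$ involving the alternating vector collapse, once (a)--(c) are in place, to $M_{\ell,\ell}=M_{\ell+1,\ell+1}$ for every $\ell$, forcing $M=cI_{p-2}$. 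The main obstacle is the careful bookkeeping in (a)--(c) to order the constraints correctly and to verify that each peeling step uses only equations already available and remains within the valid index ranges; the small cases $p=3,4$, where parts of families (b)--(c) become vacuous, may warrant brief separate verification before invoking Theorem~\ref{agler thm} to conclude.
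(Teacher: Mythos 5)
Your proposal is correct, and for the crucial step it takes a genuinely different route from the paper. Both arguments share the same skeleton: verify the off-diagonal pattern and the rank $p-2$ by direct computation with the Gram vectors, then invoke Theorem~\ref{agler thm} together with the count $\#\nonedge=\binom{p-1}{2}-1$ to reduce extremality to a single linear-algebra claim about $U_{\nonedge}$. Where you diverge is in how that claim is proved. The paper works primally: it takes $u_3,\dots,u_p$ as a basis of $\R^{p-2}$, expands $u_1$ and $u_2$ in that basis, and argues that the resulting collection of matrices ($V$ together with the $\bar M^k$ and $\tilde M^k$) is linearly independent. You instead observe that every generator $u_su_t^T+u_tu_s^T$ with $\{s,t\}\in\nonedge$ has trace $2u_s^Tu_t=0$, so the span sits inside the trace-zero subspace of dimension $\binom{p-1}{2}-1$, and equality is equivalent to the dual statement that the only $M\in\s^{p-2}$ with $u_s^TMu_t=0$ for all nonedges is a multiple of $I_{p-2}$. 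Your peeling argument for this annihilator computation checks out: family (a) kills row $p-2$ off the diagonal, (b) then kills row $1$, (c) peels the intermediate rows by induction, and (d) uses the alternating vector to equalize the diagonal. What the dual route buys you is that each constraint becomes a transparent local relation among entries of a single unknown matrix $M$, and every step of the elimination is mechanically verifiable; in particular you sidestep the most delicate point of the paper's argument, namely the final claim that the mixed collection $V\cup\{\bar M^k\}\cup\{\tilde M^{k'}\}$ is linearly independent because no member of one family is expressible in terms of the others. What the paper's primal route buys is an explicit basis-level description of the frame space itself. Your caveat about the small cases $p=3,4$ (where families (b) and (c) are vacuous) is appropriate, and those cases are immediate.
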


\begin{proof}
It is easy to check that all entries of $\Delta_{p,F}$ corresponding to nonedges of $G$ will contain a zero.  
Notice also that all adjacent pairs $u_j,u_{j+1}$  have inner product $1$ except for the pair $u_i,u_{i+1}$, when $p$ is even, whose inner product is $-1$.  
Moreover, $(u_1,u_2,\ldots,u_p)$ spans $\R^{p-2}$, and therefore $\rank(\Delta_{p,F})=p-2$.  
Thus, by Theorem \ref{agler thm}, it only remains to verify that $\rank(U_{\nonedge})={p-1\choose 2}-1$.  
However, since $\#\nonedge={p\choose 2}-p={p-1\choose 2}-1$, it suffices to show that the collection of matrices $U_{\nonedge}$ are a linearly independent set.  

Without loss of generality, we set $i=1$.
First it is noted that the vectors $u_3, \, u_4, \, \ldots u_p$ are
linearly independent in $\R^{p-2}$ and we consider
them as a basis of the vector space $\R^{(p-2)}$.  Thus we can write $u_1$ and $u_2$
as follows: 
\[
 \begin{array}{ccl}
u_1 & = & \sum_{i = 3}^p (-1)^{(p+i)} u_i,\\
u_2 & = & \sum_{i = 3}^p (-1)^i (i - 2) u_i.\\
\end{array}
\]

Since the graph $G$ is a cycle of length $p$, $\nonedge$ does not
contain $\{i, i+1\}$ for $i = 1, \ldots p-1$ and $\{p, 1\}$.  Thus, we
consider the set of matrices  
\[
V:= \{u_i \cdot u_j^T + u_j \cdot u_i^T : i = 3, \ldots, p, \, i < j, \, j
\not = i+1\} \subset \R^{(p-2)\times (p-2)}.
\]
Note that $u_i \cdot u_j^T + u_j \cdot u_i^T$ is a $(p-2) \times
(p-2)$ matrix $M$ whose $(i' j')^{th}$ element is
\[
M_{i' j'}=  \begin{cases}
1 & \mbox{if } i' = i, \, j' = j, \, j' \not = i' + 1,\\
1 & \mbox{if } i' = j, \, j' = i, \, j' \not = i' + 1,\\
0 & \mbox{otherwise}.\\
\end{cases} 
\]
Hence, the set
\[
V= \{u_i \cdot u_j^T + u_j \cdot u_i^T| i = 3, \ldots p, \, i < j, \,  j
\not = i+1\}
\]
is linearly independent.  

Now we consider the matrix $u_1 \cdot u_k^T + u_k\cdot u_1^T$ for $k =
3, \ldots , p-1$.  Note that
\[
\begin{array}{ccl}
u_1 \cdot u_k^T + u_k\cdot u_1^T &=& \left( \sum_{i = 3}^p
  (-1)^{(p+i)} u_i \right) \cdot u_k^T + u_k\cdot \left( \sum_{i = 3}^p
  (-1)^{(p+i)} u_i \right)^T\\
&=& \left( \sum_{i = 3}^p  (-1)^{(p+i)} u_i \cdot u_k^T \right) +
\left( \sum_{i = 3}^p
  (-1)^{(p+i)} u_k \cdot u_i^T \right)\\
& =: & \bar{M}^k,\\
\end{array}
\]
where 
\[
\bar{M}_{i' j'}^k = \begin{cases}
(-1)^{(p+i')} \cdot 2 & \mbox{if } i' = j' = k,\\
(-1)^{(p+i')}  & \mbox{if } i' \not = k, \, i' = 3, \ldots , (p - 1)\mbox{ and } j' = k,\\
(-1)^{(p+j')}  & \mbox{if } i' = k, \, j' \not = k,  \mbox{ and } j'=3, \ldots , (p - 1), \\
0 & \mbox{else.}
\end{cases}
\]
In addition, we consider the matrix $u_2 \cdot u_k^T + u_k\cdot u_2^T$ for $k =
4, \ldots , p$.  Note that
\[
\begin{array}{ccl}
u_2 \cdot u_k^T + u_k\cdot u_2^T &=& \left( \sum_{i = 3}^p (-1)^i (i -
  2) u_i \right) \cdot u_k^T + u_k\cdot \left( \sum_{i = 3}^p (-1)^i (i - 2) u_i \right)^T\\
&=& \left( \sum_{i = 3}^p (-1)^i (i - 2) u_i\cdot u_k^T \right) +
\left( \sum_{i = 3}^p
 (-1)^i (i - 2)  u_k \cdot u_i^T \right)\\
& =: & \tilde{M}^k,\\
\end{array}
\]
where 
\[
\tilde{M}_{i' j'}^k = \begin{cases}
(-1)^{i'} \cdot 2 \cdot (i' - 2)  & \mbox{if } i' = j' = k,\\
(-1)^{i'} \cdot  (i' - 2)  & \mbox{if } i' \not = k, \, i' = 3, \ldots , (p - 1)\mbox{ and } j' = k,\\
(-1)^{j'} \cdot (j' - 2) & \mbox{if } i' = k, \, j' \not = k,  \mbox{ and } j'=3, \ldots , (p - 1), \\
0 & \mbox{else.}
\end{cases}
\]
Since $V$ does not contain the matrices $\hat{M}^i$ for $i = 3, \ldots p$ such that 
\[
\hat{M}_{i' j'}^i=  \begin{cases}
1 & \mbox{if } i' = i, \, j' = i + 1,\\
0 & \mbox{otherwise,}\\
\end{cases} 
\]
and the matrices
$M'^i$ for $i = 3, \ldots p$ such that 
\[
{M'}_{i' j'}^i=  \begin{cases}
1 & \mbox{if } i' = i, \, j' = i,\\
0 & \mbox{otherwise, }\\
\end{cases} 
\]
we cannot write $\tilde{M}^k$ in terms of $\bar{M}^{k'}$ and
elements of $V$ (and also we cannot write $\bar{M}^k$ in terms of $\tilde{M}^{k'}$ and
elements of $V$) for $k = 3, \ldots, p-1$ and $k' = 4, \ldots, p$. Hence, the
matrices $\tilde{M}^k$ for $k = 4, \ldots, p$, $\bar{M}^k$ for $k' = 3,
\ldots, p-1$, and the matrices in $V$  are linearly independent.  
\end{proof}

To provide some intuition as to the construction of the remaining extremal matrices we note that a $k$-dimensional Gram representation of a graph $G$ with vertex set $[p]$ is a map $Y:[p]\longrightarrow \R^k$ such that $\spn_\R\{Y(i)|i\in[p]\}=\R^k$ and $Y(i)^TY(j)=0$ for all $\{i,j\}\in\nonedge$.
Hence, the Gram representation $(u_1,u_2,\ldots,u_p)$ is an inclusion of the graph $G$ into the hypercube $[-1,1]^{p-2}$.  
Here, the vertex $i$ of $G$ is identified with the vector $u_i\in\R^k$.  
In this way, the underlying cut $U$ of a cutset $\delta(U)$ of $G$ is now a collection of vectors as opposed to a collection of indices.  
We now consider the cutsets $\delta(U)$ of $G$ with respect to the representation $(u_1,u_2,\ldots, u_p)$ for the maximal odd cardinality subsets $F$, and negate the vectors in the underlying cut $U$ to produce the desired extremal matrices for lower cardinality odd subsets of $[p]$.
This is the content of the following lemma.

\begin{lem}
\label{lower cardinality T case}
Let $F\subset[p]$ be a subset of odd cardinality. 
There exists a rank $p-2$ extremal matrix $\Delta_{p,F}$ in $\symG$ with off-diagonal entries satisfying
	$$(\Delta_{p,F})_{s,t}=\begin{cases} 0 & \mbox{ if $\{s,t\}\in\nonedge$}, \\ 1 & \mbox{ if $t=s+1$ and $s\in F$}, \\ -1 & \mbox{ if $t=s+1$ and $s\notin F$}. \end{cases}$$
\end{lem}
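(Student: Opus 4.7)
The plan is to construct each $\Delta_{p,F}$ by applying a sign-flip to the Gram representation of the known extremal matrix $\Delta_{p,F_0}$ supplied by Lemma~\ref{maximal cardinality T case}, where $F_0$ is a maximal odd-cardinality subset of $[p]$ (so $F_0 = [p]$ if $p$ is odd and $F_0 = [p]\setminus\{i_0\}$ for some $i_0$ if $p$ is even), and the signs are dictated by a cut of $C_p$. Let $(u_1,\ldots,u_p)$ denote the Gram representation of $\Delta_{p,F_0}$ constructed immediately before Lemma~\ref{maximal cardinality T case}.

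The key observation is that $F \triangle F_0 \subset [p]$ has \emph{even} cardinality (difference of two odd sets), so by Lemma~\ref{cut polytope of the cycle lemma} — equivalently, by the elementary fact that an even-cardinality edge subset of a cycle is always a cutset — there exists a vertex bipartition $(U,U^c)$ of $C_p$ with $\delta_{C_p}(U) = F \triangle F_0$ under the identification of edge $\{i,i+1\}$ with index $i$. Set $\epsilon_i := -1$ if $i \in U$ and $\epsilon_i := +1$ otherwise, define $v_i := \epsilon_i u_i$, and let $\Delta_{p,F}$ be the positive semidefinite matrix with Gram representation $(v_1,\ldots,v_p)$.

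The remaining verification splits into three routine checks. First, for every nonedge $\{s,t\}\in\nonedge$ we have $(\Delta_{p,F})_{s,t} = \epsilon_s\epsilon_t u_s^T u_t = 0$, and for every cycle edge $\{s,s+1\}$ the entry $(\Delta_{p,F})_{s,s+1} = \epsilon_s\epsilon_{s+1}(\Delta_{p,F_0})_{s,s+1}$ flips sign from $(\Delta_{p,F_0})_{s,s+1}$ exactly when $\{s,s+1\}\in\delta_{C_p}(U) = F\triangle F_0$, i.e.\ exactly when the membership of $s$ in $F$ differs from that in $F_0$; a short parity check then shows the resulting sign pattern is $+1$ for $s\in F$ and $-1$ for $s\notin F$, as required. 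Second, since $v_i = \pm u_i$, we have $\spn_\R\{v_i\} = \spn_\R\{u_i\} = \R^{p-2}$, so $\rank(\Delta_{p,F}) = p-2$.

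Third, to check extremality via Theorem~\ref{agler thm}, note that each element of the new $U_{\nonedge}$ is $v_i v_j^T + v_j v_i^T = \epsilon_i\epsilon_j(u_i u_j^T + u_j u_i^T)$, a signed version of the corresponding generator for $\Delta_{p,F_0}$. Since the generators for $\Delta_{p,F_0}$ were shown to be linearly independent in Lemma~\ref{maximal cardinality T case} and signing preserves linear independence, we obtain $\rank(U_{\nonedge}) = \binom{p-1}{2}-1$, and Theorem~\ref{agler thm} yields extremality. The only step that carries any subtlety is the parity bookkeeping in matching the signs of the off-diagonal entries; everything else follows immediately from the invariance of span and linear independence under sign flips.
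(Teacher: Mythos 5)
Your proposal is correct and follows essentially the same route as the paper: both start from the Gram representation of $\Delta_{p,F_0}$ for a maximal odd set $F_0$ and negate the vectors on one side of a cut whose cutset is an even edge subset, then observe that rank and the linear independence of $U_{\nonedge}$ are unchanged. Your use of the symmetric difference $F\triangle F_0$ is a slightly cleaner packaging that unifies the paper's separate cases ($p$ odd versus $p$ even, and $i\in F$ versus $i\notin F$), since $F\triangle F_0$ reduces to exactly the sets $F^c$, $F^c\cup\{i\}$, or $F^c\setminus\{i\}$ used there.
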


\begin{proof}
We produce the desired matrices in two separate cases, when $p$ is odd and when $p$ is even.
Suppose first that $p$ is odd, and consider the $(p-2)$-dimensional Gram representation $(u_1,u_2,\ldots,u_p)$ defined above for the extremal matrix $\Delta_{p,[p]}$.  
This Gram representation includes $G$ into the hypercube $[-1,1]^{p-2}$ such that vertex $i$ of $G$ corresponds to $u_i$. 

We now consider the cuts of $G$ with respect to this inclusion.  
Recall from Section~\ref{preliminaries} that even subsets of $E$ are the cutsets $\delta(U)$ of $G$, and they correspond to a unique cut $(U,U^c)$ of $G$.  
For each $i\in[p]$ we can consider the edge $\{i,i+1\}\in E$.  
Let $F\subset[p]$ be of odd cardinality.  
Then $F^c$ is of even cardinality and hence has an associated cut $(U,U^c)$ such that $F^c=\delta(U)$.  
Now, thinking of $U\subset[p]=V(G)$, negate all vectors in $(u_1,u_2,\ldots,u_p)$ with indices in $U$ to produce a new $(p-2)$-dimensional representation of $G$, say $(w_1,w_2,\ldots,w_p)$, where
	$$w_{t}:=\begin{cases} -u_t & \mbox{ if $t\in U$}, \\ u_t & \mbox{ if $t\notin U$}. \end{cases}$$
Let $\Delta_{p,F}$ denote the matrix with Gram representation $(w_1,w_2,\ldots,w_p)$.  
Since $F^c=\delta(U)$ is a cutset, negating all the vectors $u_t$ with $t\in U$ results in $(\Delta_{p,F})_{i,i+1}=-1$ for every $i\in F^c$, and all other entries of $\Delta_{p,F}$ remain the same as those in $\Delta_{p,[p]}$.  
Moreover, $\rank(\Delta_{p,F})=\rank(\Delta_{p,[p]})$ and  $\rank(U_{\nonedge})={p-1 \choose 2}-1$.  
Thus, $\Delta_{p,F}$ is extremal in $\symG$ with rank $p-2$.  

Now suppose that $p$ is even.
Fix $i\in[p]$ and consider the $(p-2)$-dimensional Gram representation $(u_1,u_2,\ldots,u_p)$ defined above for the extremal matrix $\Delta_{p,[p]\backslash\{i\}}$.  
Partition the collection of odd subsets of $[p]$ into two blocks, $A$ and $B$, where $A$ consists of all odd subsets of $[p]$ containing $i$.  
Let $F\subset[p]$ be of odd cardinality, and suppose first that $F\in A$.  
Consider the even cardinality subset $M=F^c\cup\{i\}$.  
Thinking of each $i$ in $[p]$ as corresponding to the edge $\{i,i+1\}\in E$, it follows that $M=\delta(U)$ for some cut $(U,U^c)$ of $G$.
Once more, thinking of $U\subset[p]=V(G)$, set
	$$w_{t}:=\begin{cases} -u_t & \mbox{ if $t\in U$}, \\ u_t & \mbox{ if $t\notin U$}, \end{cases}$$
and let $\Delta_{p,F}$ denote the matrix with Gram representation $(w_1,w_2,\ldots,w_p)$.  
Since $M$ is a cutset, it follows that $(\Delta_{p,F})_{s,s+1}=-(\Delta_{p,[p]\backslash\{i\}})_{s,s+1}$.  
In particular, $(\Delta_{p,F})_{i,i+1}=1$.

Finally, suppose $F\in B$, and consider the even cardinality subset $M=F^c\backslash\{i\}$.  
Proceeding as in the previous case produces the desired matrix $\Delta_{p,F}$.  
Just as in the odd case, the matrices $\Delta_{p,F}$ for $p$ even are extremal of rank $p-2$.  
\end{proof}

\begin{example}
\label{lower cardinality T example}
We illustrate the construction in the proof of Lemma \ref{lower cardinality T case} by considering the case $p=4$ and $i=1$.  
The corresponding maximum cardinality subset is $\{2,3,4\}$.  
The $(p-2)$-dimensional Gram representation for this maximum cardinality odd subset is $(u_1,u_2,u_3,u_4)$,
where
$$
u_1:=\begin{bmatrix}
			0 \\
			1\\
			\end{bmatrix}  
\qquad
u_{2}:=\begin{bmatrix}
			1 \\
			-1\\
			\end{bmatrix} 
 \qquad
u_{3}:=\begin{bmatrix}
			1 \\
			0\\
			\end{bmatrix} 
 \qquad
u_{4}:=\begin{bmatrix}
			1 \\
			1\\
			\end{bmatrix}.
$$
The resulting extremal matrix in $\symG$ is 
$$
\Delta_{4,\{2,3,4\}}=\begin{bmatrix}
1	&	-1	&	0	&	1	\\
-1	&	2	&	1	&	0	\\
0	&	1	&	1	&	1	\\
1	&	0	&	1	&	2	\\
\end{bmatrix}.
$$

Now consider the odd cardinality subset $F:=\{2\}\subset[4]$.  
Then $M = F^c\backslash\{1\}=\{3,4\}\simeq\{\{3,4\},\{4,1\}\}\subset E(C_4)$.  
Thus, $M=\delta(U)$ where $U=\{4\}$.  
The Gram representation identified in the proof of Lemma~\ref{lower cardinality T case} is $(w_1,w_2,w_3,w_4):=(u_1,u_2,u_3,-u_4)$.  
Both of these Gram representations are depicted in Figure~\ref{fig: lower cardinality example}.
	\begin{figure}
	\centering
	\includegraphics[width=0.9\textwidth]{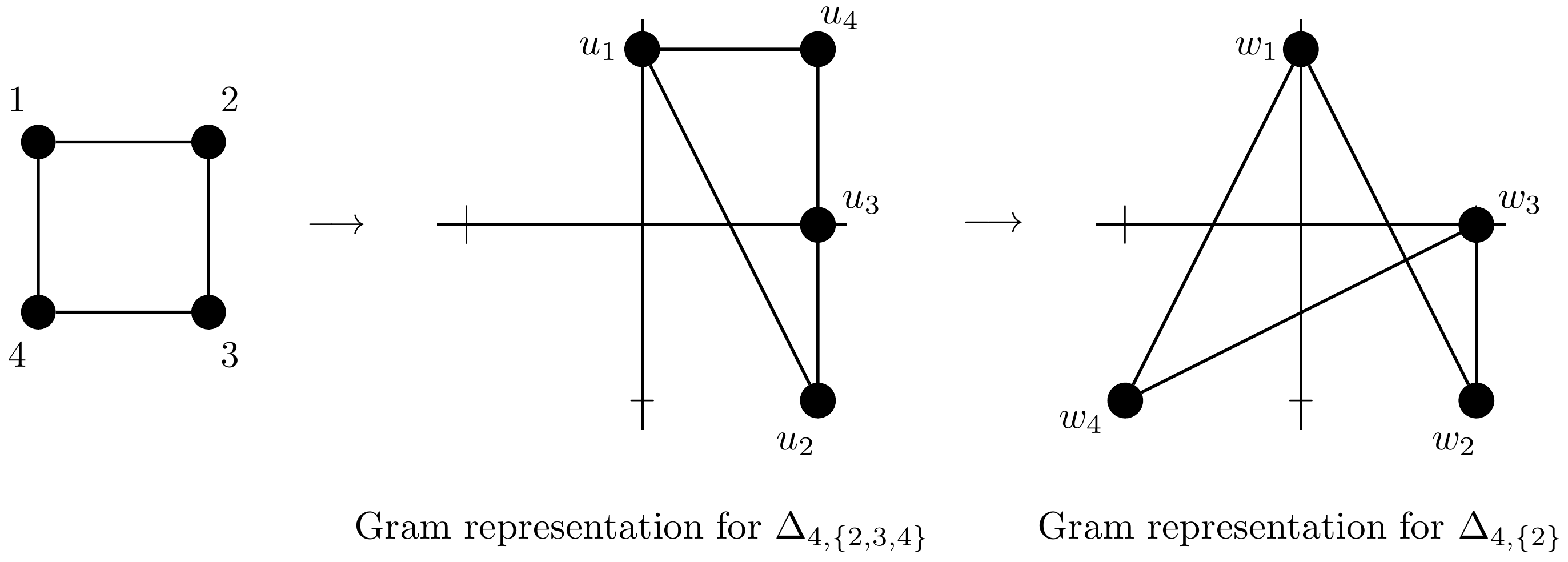}
	\caption{$C_4$ included into $[-1,1]^2$ via its Gram representation, and the Gram representations for Example \ref{lower cardinality T example}.}
	\label{fig: lower cardinality example}
	\end{figure}
The resulting extremal matrix associated to $F$ is
$$
\Delta_{4,\{2\}}=\begin{bmatrix}
1	&	-1	&	0	&	-1	\\
-1	&	2	&	1	&	0	\\
0	&	1	&	1	&	-1	\\
-1	&	0	&	-1	&	2	\\
\end{bmatrix}.
$$
Notice that the off-diagonal entries corresponding to the edges of $C_4$ are given by 
$$
-v^F=(-1,1,-1,-1)\in\R^{E(C_4)},
$$
the normal vector to the facet-supporting hyperplane $\langle v^F,x\rangle = 2$ of $\cutpm{C_4}$. \hfill$\square$
\end{example}

Lemmas \ref{maximal cardinality T case} and \ref{lower cardinality T case} combined provide a proof of Theorem \ref{simplicial facets correspondence thm}:


\subsubsection*{Proof of Theorem \ref{simplicial facets correspondence thm}}
Recall that we identify $\R^E\simeq\R^p$ by identifying the coordinate $e=\{i,i+1\}$ in $\R^E$ with the coordinate $i$ in $\R^p$.  
Consider the projection map $\pi_{E}:\s^p\longrightarrow\R^E\simeq\R^p$ that projects a matrix onto its coordinates corresponding to the edges of $G$.  
For an odd cardinality subset $F$ of $[p]$, the matrix $\Delta_{p,F}$ satisfies $\pi_E(\Delta_{p,F})=-v^F$.  
This completes the proof of Theorem \ref{simplicial facets correspondence thm}. \hfill$\square$

\bigskip

Proposition \ref{halfcubical matrices} and Theorem \ref{simplicial facets correspondence thm} combine to prove that the $p$-cycle has the facet-ray identification property.
We now use these results to provide a proof of Theorem~\ref{main theorem graphs with no K_5 minors}.  


\subsubsection*{Proof of Theorem \ref{main theorem graphs with no K_5 minors}}
Let $G$ be a graph without $K_5$ minors.  
To show that $G$ has the facet-ray identification property, we must produce for every facet $F$ of $\cutpm{G}$ an extremal matrix $M\in \symG$ whose off-diagonal entries are given by the normal vector to $F$.
Recall from Section~\ref{preliminaries} that the supporting hyperplanes of $\cutpm{G}$ are 
\begin{enumerate}[(1)]
	\item $\langle \pm v^e, x \rangle=1$ for all $e\in E$, and
	\item $\langle v^F, x\rangle = m-2$ for all odd cardinality subsets $F\subset E(C_m)$ for all chordless cycles $C_m$ in $G$.  
\end{enumerate} 
In the case of the cycle $C_m$ we have constructed the desired extremal matrices $X_e$, $X_e^-$, and $\Delta_{m,F}$ for each such hyperplane, and each such matrix possesses an underlying Gram representation $(u_1,\ldots,u_m)$.  
Thus, we define the $(m-2)$-dimensional Gram representation $(w_1,w_2,\ldots,w_{|E|})$ where
$$
w_{t}:=
\begin{cases}
 u_t & \mbox{ if $t\in[m]\simeq V(C_m)\subset [p]=V(G)$}, \\
 0 & \mbox{ otherwise}. 
\end{cases}
$$
Let $\widetilde X_e$, $\widetilde X_e^-$, and $\widetilde \Delta_{m,F}$ denote the resulting matrices in $\symG$ with Gram representation $(w_1,\ldots,w_{|E|})$.  
It follows from Proposition~\ref{halfcubical matrices}, Lemma~\ref{maximal cardinality T case} and Lemma~\ref{lower cardinality T case} that these matrices are extremal in $\symG$ of rank $1$, $1$, and $m-2$, respectively.  
This completes the proof of Theorem~\ref{main theorem graphs with no K_5 minors}.\hfill$\square$


\subsection{The Geometry of Facet-ray Identification Revisited}
\label{the geometric correspondence of facet-ray identification}

In Section~\ref{the dual of an elliptope} we saw that the vertices of the polar $\E_G^\circ$ lie on the extremal rays of the cone $\symG$.  
In the polar, this means that an extremal ray of $\symG$ corresponds to a hyperplane supporting $\E_G$.  
Since the extremal rays of $\symG$ are the dimension $1$ faces of the cone, we say that the \emph{rank} of an extremal ray $r$ of $\symG$ is the rank of any nonzero matrix lying on $r$.   
Thus, the rank of an extremal ray of $\symG$ is given by the rank of the corresponding vertex of $\E_G^\circ$. 
In the polar, the \emph{rank} of a supporting hyperplane of $\E_G$ is the rank of the corresponding extremal ray in $\symG$.

Let $G$ be a graph without $K_5$ minors. 
For each facet of $\cutpm{G}$ we have identified an extremal matrix $X_e$, $X_e^-$, or $\Delta_{m,F}$, and each such matrix generates an extremal ray of $\symG$:
$$
r_e:=\spn_{\geq0}(X_e), 
\qquad 
r_e^-:=\spn_{\geq0}(X_e^-),
\qquad 
\mbox{and} 
\qquad
r_{m,F} :=\spn_{\geq0}(\Delta_{m,F}),
$$
respectively.  
Recall from Theorem~\ref{dual body theorem} that $\E_G^\circ$ is a projection of  the trace two affine section of the cone $\symG$.  
Since $\tr(X_e)=\tr(X_e^-)=2$ these matrices correspond to vertices of $\E_G^\circ$, which dually correspond to the facet-supporting hyperplanes $\langle \pm v^e,x\rangle =1$ of the elliptope $\E_G$.  
On the other hand, $\tr(\Delta_{m,F})=\sum_{t=1}^mu_t^Tu_t=3m-6.$
Thus, the matrix $Y_{m,F}:=\frac{2}{3m-6}\Delta_{m,F}$ corresponds to the regular extreme point $\pi_E(Y_{m,F})=-\frac{2}{3m-6}v^F$ of $\E_G^\circ$.
Hence, the corresponding hyperplane in $\E_G$ is 
$$
\langle v^F,x \rangle=\frac{6-3m}{2}.
$$
So the supporting hyperplane $\langle v^F,x\rangle = m-2$ of $\cutpm{G}$ is a translation by $\frac{5m-10}{2}$ of this rank $m-2$ hyperplane.  
This illustrates the geometry described in Section~\ref{the geometric correspondence}.  

\begin{rmk}
\label{holds for all FRIP graphs}
Note that the geometric correspondence between facets and extremal rays discussed in this section holds for any graph with the facet-ray identification property.
Thus, while our proof of this property is combinatorial, the property itself is inherently geometric.
\end{rmk}


\section{Characterizing Extremal Ranks}
\label{characterization of all extremal ranks for series-parallel graphs}
In this section, we discuss when facet-ray identification characterizes all extreme ranks of $\symG$.  

\subsection{Series-parallel graphs.}
\label{sec:series-parallel}
Let $G$ be a series-parallel graph.  
We show that the extremal ranks identified by the facets of $\cutpm{G}$ are \emph{all} the possible extremal ranks of $\symG$, thereby completing the proof of Theorem~\ref{main theorem characterization of all extremal ranks}.  
To do so, we consider the dual cone of $\symG$, namely the cone of all PSD-completable matrices, which we denote by $\CC_G$.  
Recall that a \emph{(real) $p\times p$ partial matrix} $A=[a_{ij}]$ is a matrix in which some entries are specified real numbers and the remainder are unspecified.  
It is called \emph{symmetric} if all the specified entries satisfy $a_{ij}=a_{ji}$, and it is called \emph{PSD-completable} if there exists a specification of the unknown entries of $A$ that produces a matrix $\widetilde A\in\symmetricmatrices$ that is positive semidefinite.  
It is well-known that the dual cone to $\symG$ is the cone $\CC_G$ of all PSD-completable matrices.   
Let $H$ be an induced subgraph of $G$, and let $A[H]$ denote the submatrix of $A\in\s^m$ whose rows and columns are indexed by the vertices of $H$.  
A symmetric partial matrix $A$ is called \emph{(weakly) cycle-completable} if the submatrix $A[C_m]\in\s^m$ is PSD-completable for every chordless cycle $C_m$ in $G$.

\subsubsection*{Proof of Theorem~\ref{main theorem characterization of all extremal ranks}}
By Theorem~\ref{main theorem graphs with no K_5 minors}, $G$ has the facet-ray identification property, and the extreme matrices in $\symG$ identified by the facets of $\cutpm{G}$ are of rank $1$ and $m-2$, where $m$ varies over the length of all chordless cycles in $G$.  
So it only remains to show that these are all the extremal ranks of $\symG$.  
To do so, we consider the dual cone to $\symG$.

In \cite{BJL96} it is shown that a symmetric partial matrix $A$ is in the cone $\CC_G$ if and only if $A$ is cycle completable.  
Since $\CC_G$ is the dual cone to $\symG$ it follows that $A\in\CC_G$ if and only $\langle A,X\rangle\geq0$ for all $X\in \symG$.  
Applying this duality, we see that the matrix $A$ satisfies
$
\langle A,X\rangle\geq 0
$
 for all $X\in\symG$
if and only if
$
\langle A[C_m],X\rangle\geq 0 
$
 for all extremal matrices $X\in\s^m_{\succeq0}(C_m)$ for all chordless cycles $C_m$ in $G$.  
Here, we think of the matrices $A[C_m]$ and $X\in\s^m_{\succeq0}(C_m)$ as living in $\s^p$ by extending the matrices $A[C_m]$ and $X$ in $\s^{V(C_m)}$ by placing zeros in the entries corresponding to edges not in the chordless cycle $C_m$.  
It follows from this that the cone $\CC_G$ is dual to $\symG$ and the cone whose extremal rays are given by the chordless cycles in $G$.  
Thus, these two cones must be the same, and we conclude that the only possible ranks of the extremal rays of $\symG$ are those given by the ranks of $\s^m_{\succeq0}(C_m)$ as $C_m$ varies over all chordless cycles in $G$.  
This completes the proof of Theorem~\ref{main theorem characterization of all extremal ranks}.  \hfill$\square$

\subsection{Some further examples.}
Theorem~\ref{main theorem characterization of all extremal ranks} provides a subcollection of the graphs with no $K_5$ minors for which the facets of $\cutpm{G}$ characterize all extremal ranks of $\symG$, namely those which also have no $K_4$ minors.  
It is then natural to ask whether or not the extreme ranks of the graphs with $K_4$ minors but no $K_5$ minors are characterized by the facets as well.  
The following two examples address this issue.  
Example~\ref{k-3-3 example} is an example of a graph $G$ with a $K_4$ minor but no $K_5$ minor for which the facets do not characterize all extremal ranks of $\symG$, and Example~\ref{k-4 minor no k-3-3 minor example} is an example of a graph $G$ with a $K_4$ minor but no $K_5$ minor for which the extremal ranks of $\symG$ are characterized by the facets of $\cutpm{G}$.  

\begin{example}
\label{k-3-3 example}
Consider the complete bipartite graph $G:= K_{3,3}$. 
In \cite{grone} the extremal rays of $\symG$ are characterized, and it is shown that $G$ has extremal rays of ranks $1,2,$ and $3$.
However, with the help of {\tt Polymake} \cite{polymake} we see that the facet-supporting hyperplanes of $\cutpm{G}$ are $x_e=\pm1$ for each edge $e\in E(G)$ together with $\langle v^F,x\rangle = m-2$ as $C_m$ varies over the nine (chordless) $4$-cycles within $G$.  
Thus, the constant terms of the facet-supporting hyperplanes only capture extreme ranks $1$ and $2$, but not $3$.  
\end{example}

	\begin{figure}
	\centering
	\begin{tabular}{c c}
	\includegraphics[width=0.2\textwidth]{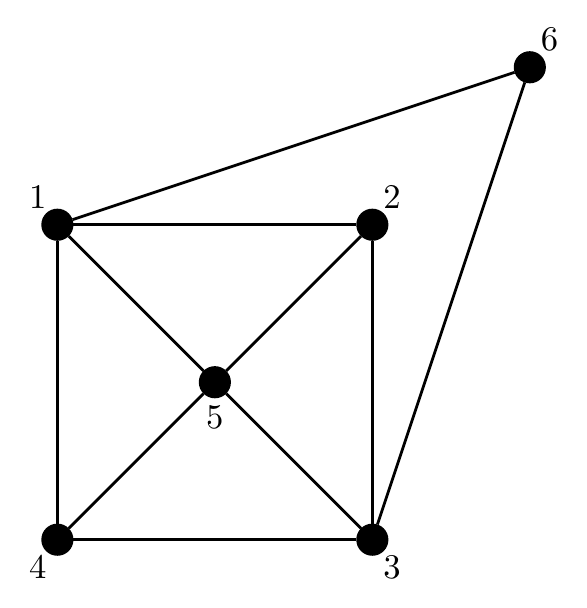}	\qquad&\qquad	\includegraphics[width=0.35\textwidth]{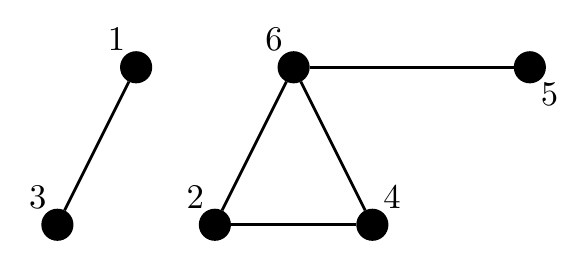}	\\
	$G$\qquad		&\qquad	$G^c$	\\
	\end{tabular}
	\caption{The graph $G$ from Example~\ref{k-4 minor no k-3-3 minor example} and its complement $G^c$.}
	\label{fig: k-4 minor no-k-3-3 minor}
	\end{figure}

\begin{example}
\label{k-4 minor no k-3-3 minor example}
Consider the graph $G$ depicted in Figure~\ref{fig: k-4 minor no-k-3-3 minor}. Recall that a $k$-block is a graph $P$ of order $k$ that has no proper induced subgraph of order $k$.  
Agler et al.~characterized all $3$-blocks in \cite[Theorem 1.5]{agler} in terms of their complements.  
It follows immediately from this theorem that $G$ contains no induced $3$-block.  
Thus, $\ord(G)\leq 2$, and since $G$ is not a chordal graph we see that $\ord(G) = 2$.  
By Theorem~\ref{main theorem characterization of all extremal ranks} the facets of $\cutpm{G}$ identify extremal rays of rank $1$ and $2$.  
Thus, all possible extremal ranks of $G$ are characterized by the facets of $\cutpm{G}$.  
\end{example}

The reader may also notice that the graph $G$ from Example~\ref{k-4 minor no k-3-3 minor example} also no $K_{3,3}$ minor, while the graph from Example~\ref{k-3-3 example} is $K_{3,3}$.  
Thus, it is natural to ask if the collection of graphs for which the facets characterize the extremal ranks of $\symG$ are those with no $K_{3,3}$ minor.  
The following example shows that this is not the case. 

\begin{example}
\label{k-4 minor no k-3-3 minor not characterized example}
Consider the following graph $G$ and its complement $G^c$: 
	\begin{center}
	\centering
	\begin{tabular}{c c}
	\includegraphics[width=0.2\textwidth]{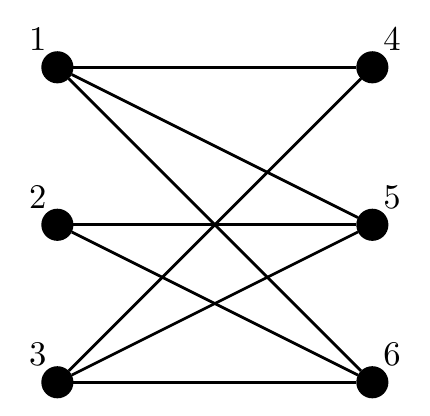}	\quad\qquad&\quad\qquad	\includegraphics[width=0.35\textwidth]{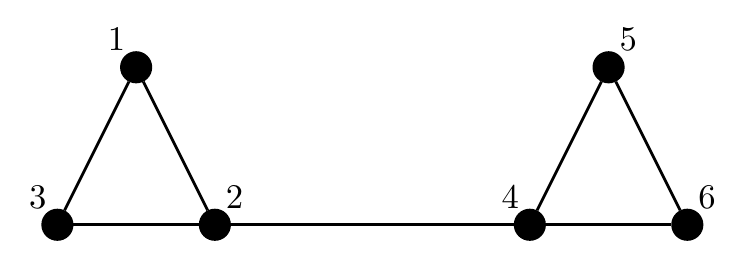}	\\
	$G$		\quad\qquad&\qquad\quad	$G^c$	\\
	\end{tabular}
	\label{fig: k-4 minor no k-3-3 minor}
	\end{center}
Notice that $G$ contains no $K_{3,3}$ minor, but it does contain a $K_4$ minor.  
By \cite[Theorem 1.5]{agler} $G$ is a $3$-block since its complement graph is two triangles connected by an edge.  
Thus, $G$ has an extremal ray of rank $3$, but by Theorem~\ref{main theorem graphs with no K_5 minors} the facets of $\cutpm{G}$ only detect extremal rays of ranks $1$ and $2$.  
\end{example}

Examples~\ref{k-3-3 example}, \ref{k-4 minor no k-3-3 minor example}, and \ref{k-4 minor no k-3-3 minor not characterized example} together show that describing the collection of graphs for which the facets of $\cutpm{G}$ characterize the extremal ranks of $\symG$ is more complicated that forbidding a particular minor.
Indeed, the collection of graphs with this property is not even limited to the graphs with no $K_5$ minors, as demonstrated by Example~\ref{graph with k-5 minor example}.

\begin{example}
\label{graph with k-5 minor example}
Consider the graph $G$ depicted in Figure~\ref{fig: graph with k-5 minor}.
	\begin{figure}
	\centering
	\begin{tabular}{c c}
	\includegraphics[width=0.3\textwidth]{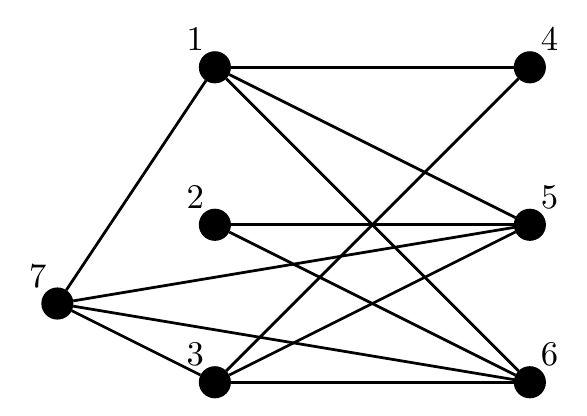}	\quad\qquad&\qquad\quad	\includegraphics[width=0.35\textwidth]{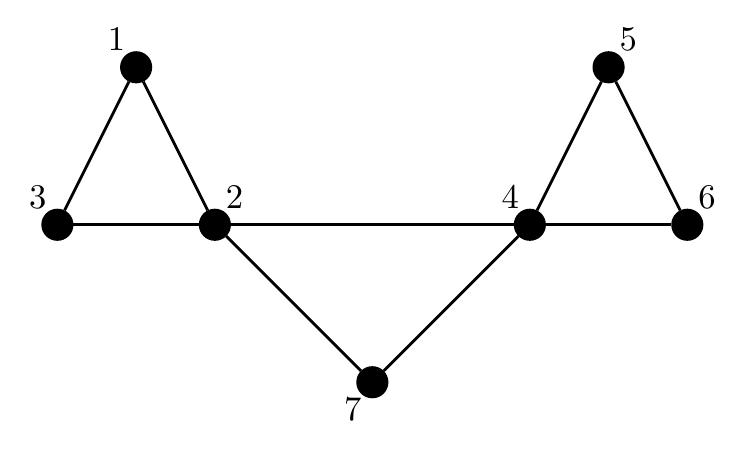}	\\
	$G$		\quad\qquad&\qquad\quad	$G^c$	\\
	\end{tabular}
	\caption{A graph with a $K_5$ minor whose facets characterize all extremal rays.}
	\label{fig: graph with k-5 minor}
	\end{figure}
To see that this graph has the facet-ray identification property we first compute the $114$ facets of $\cutpm{G}$ using {\tt Polymake} \cite{polymake}.  
The resulting computation yields $72$ cycle inequalities, 16 for the four $3$-cycles, and $56$ for the seven chordless $4$-cycles, as well as eight inequalities for the four edges not in a $3$-cycle.  
These $80$ facets identify extremal rays of rank $1$ and $2$ just as in the case of the graphs with no $K_5$ minors.  
The remaining $64$ facet-supporting inequalities of $\cutpm{G}$ are given by applying the \emph{switching operation} defined in \cite[Chapter 27]{deza} to the inequality
$$
x_{14}-x_{15}-x_{34}-x_{36}-x_{37}-x_{67}+x_{16}+x_{17}+x_{25}+x_{26}+x_{35}+x_{57}\leq 4.
$$
This new collection of facets identifies extremal rays of rank $3$.  
For example, the presented inequality specifies the off-diagonal entries of the following rank $3$ matrix:
$$
\begin{pmatrix}
2	&	0	&	0	&	1	&	1	&	-1	&	-1	\\

0	&	1	&	0	&	0	&	-1	&	-1	&	0	\\

0	&	0	&	2	&	1	&	-1	&	1	&	1	\\

1	&	0	&	1	&	1	&	0	&	0	&	0	\\

1	&	-1	&	-1	&	0	&	2	&	0	&	-1	\\

-1	&	-1	&	1	&	0	&	0	&	2	&	1	\\

-1	&	0	&	1	&	0	&	-1	&	1	&	1	\\
\end{pmatrix}.
$$
This matrix has the $3$-dimensional Gram representation
$$
u_1 = 
\begin{pmatrix}
1	\\
1	\\
0	\\
\end{pmatrix},
u_2 = 
\begin{pmatrix}
0	\\
0	\\
-1	\\
\end{pmatrix},
u_3 = 
\begin{pmatrix}
1	\\
-1	\\
0	\\
\end{pmatrix},
u_4 = 
\begin{pmatrix}
1	\\
0	\\
0	\\
\end{pmatrix},
u_5 = 
\begin{pmatrix}
0	\\
1	\\
1	\\
\end{pmatrix},
u_6 = 
\begin{pmatrix}
0	\\
-1	\\
1	\\
\end{pmatrix},
u_7 = 
\begin{pmatrix}
0	\\
-1	\\
0	\\
\end{pmatrix}.
$$
It follows via an application of Theorem~\ref{agler thm} that this matrix is extremal in $\symG$.  
Similar matrices can be constructed for each of the $64$ facets of this type.  
Thus, $G$ has the facet-ray identification property, and the facets identify extreme rays of rank $1,2,$ and $3$.  

To see that these are all of the extremal ranks of $\symG$ recall from Section~\ref{preliminaries} that since $G$ has $7$ vertices then $\ord(G)\leq 5$ with equality if and only if $G$ is the cycle on $7$ vertices.  
Thus, it only remains to show that $\ord(G)\neq 4$.  
 To see this, we examine the complement of $G$ depicted in Figure~\ref{fig: graph with k-5 minor}.  
 By \cite[Theorem 0.2]{HLW} $G$ is not a $4$-superblock since the complement of $G$ can be obtained by identifying the vertices of the graphs
 \begin{center}
\includegraphics[width=0.3\textwidth]{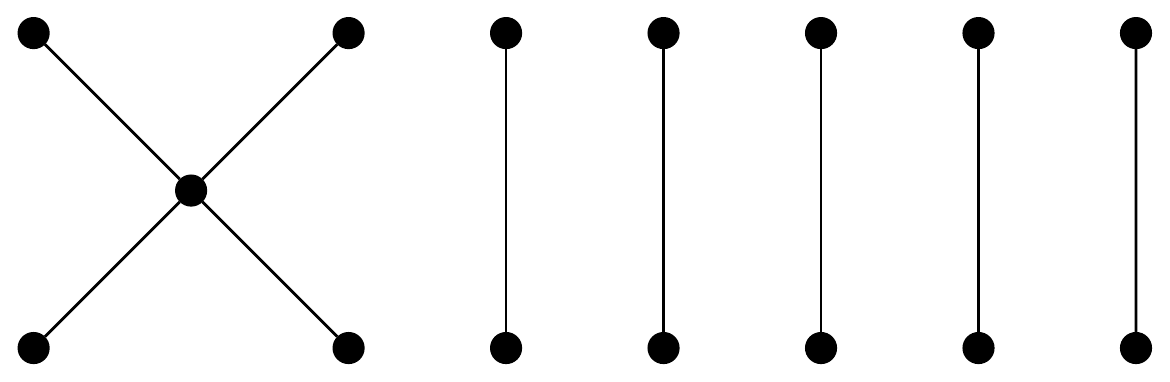}
 \end{center}
 Thus, if $G$ has rank $4$ extremal rays then it must contain an induced $4$-block.  
 However, one can check that all induced subgraphs either have order $1$, $2$, or $3$.  
 Therefore, $G$ is a graph with a $K_5$ minor that has the facet-ray identification property and for which the extremal ranks of $\symG$ are characterized by the facets of $\cutpm{G}$.  
 Moreover, this example shows that the types of facets which identify extreme rays of $\symG$ are not limited to those arising from edges and chordless cycles.  
\end{example}

We end this section with a problem presented by these various examples.

\begin{prob}
\label{characterization problem}
Determine all graphs $G$ with the facet-ray identification property for which the facets of $\cutpm{G}$ characterize all extremal ranks of $\symG$.  
\end{prob}


\section{Graphs Without the Facet-Ray Identification Property}
\label{graphs without the facet-ray identification property}

In the previous sections we discussed various graphs $G$ which have the facet-ray identification property.  
Here, we provide an explicit example showing that not all graphs admit the facet-ray identification property.  

\begin{example}
\label{parachute example}
Consider the \emph{parachute graph} on $7$ vertices depicted in Figure~\ref{fig: parachute graph}.  
The parachute graphs on $2k+1$ vertices for $k\geq1$ are defined in \cite{deza}.
	\begin{figure}
	\centering
	\includegraphics[width=0.3\textwidth]{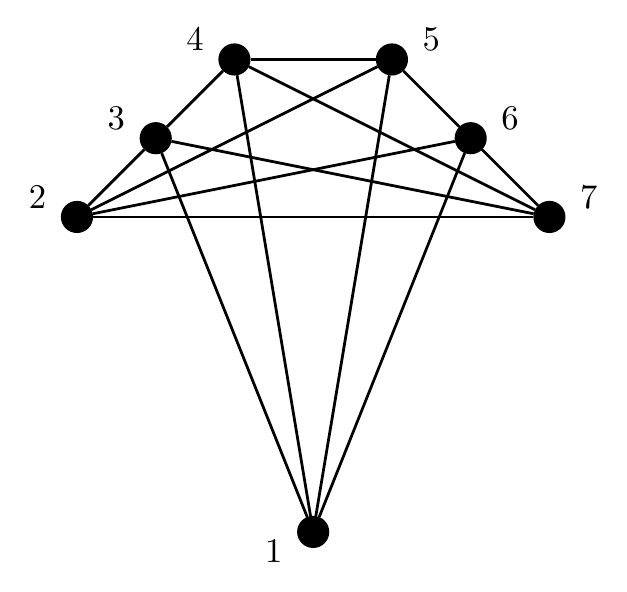}
	\caption{The parachute graph on $7$ vertices.}
	\label{fig: parachute graph}
	\end{figure}
Using {\tt Polymake}~\cite{polymake} we compute the facets of $\cutpm{G}$ and find that 
$$
x_{13}+x_{14}+x_{15}+x_{16}+x_{25}+x_{26}+x_{27}+x_{37}+x_{47}-x_{23}-x_{34}-x_{45}-x_{56}-x_{67}\leq 4
$$ 
is a facet-defining inequality.  
Thus, if $G$ has the facet ray identification property there exists a filling of the partial matrix 
$$
M := 
\begin{pmatrix}
x_1	&	0	&	1	&	1	&	1	&	1	&	0	\\

0	&	x_2	&	-1	&	0	&	1	&	1	&	1	\\

1	&	-1	&	x_3	&	-1	&	0	&	0	&	1	\\

1	&	0	&	-1	&	x_4	&	-1	&	0	&	1	\\

1	&	1	&	0	&	-1	&	x_5	&	-1	&	0	\\

1	&	1	&	0	&	0	&	-1	&	x_6	&	-1	\\

0	&	1	&	1	&	1	&	0	&	-1	&	x_7	\\
\end{pmatrix}.
$$
that results in a positive semidefinite matrix which is extremal in $\symG$.  
Notice that the minimum rank of a positive semidefinite completion of $M$ is $5$.  
To see this, recall that if the $\rank(M)<5$ then the point $(x_1,x_2,\ldots,x_7)$ must lie on the variety of the ideal $I$ generated by the $5\times 5$ minors of $M$.  
Using Macaulay2, we see that the minimal generating set for the ideal $I$ includes the generator $x_1+x_2+\ldots+x_7+10$.  
If $M$ is positive semidefinite then $x_i\geq0$ for all $1\leq i\leq 7$, and so $(x_1,\ldots,x_7)$ cannot be a point in the variety of the ideal $I$.  

On the other hand, the maximum dimension of the frame space 
$$
\spn_\R\left(U_{\nonedge}\right) = \spn_\R(u_iu_j^T+u_ju_i^T : ij\in\nonedge)
$$
for any $k$-dimensional Gram representation of $G$ is at most the number of nonedges of $G$, which is seven.  
By Theorem~\ref{agler thm}, since $7<{5+1\choose 2}-1$ no positive semidefinite completion of $M$ can be extremal in $\symG$. 
Thus, $G$ does not have the facet-ray identification property.  
\end{example}

The facet-defining inequality considered in Example~\ref{parachute example} has been studied before as a facet-defining inequality of the cut polytope of the complete graph $K_7$ by Deza and Laurent \cite{deza}, and is referred to as a \emph{parachute inequality}.  
Thus, one consequence of the above example is that $K_7$ also does not have the facet-ray identification property, nor does any $G$ for which the above inequality is facet-defining.  
This suggests that one way to determine the collection of graphs which have the facet-ray identification property is to study those facets which can never identify an extremal matrix in $\symG$.

\begin{prob}
\label{forbidden facet problem}
Determine facet-defining inequalities of $\cutpm{G}$ that can never identify extremal matrices in $\symG$. 
\end{prob}


\section*{Acknowledgements}
We wish to thank Alexander Engstr\"om and Bernd Sturmfels for various valuable discussions and insights.  CU was partially supported by
the Austrian Science Fund (FWF) Y 903-N35.


\end{document}